\newtheorem{theorem}{Theorem}
\newtheorem{lemma}[theorem]{Lemma}
\newtheorem{corollary}[theorem]{Corollary}
\theoremstyle{definition}
\newtheorem{example}[theorem]{Example}
\numberwithin{theorem}{section}
\newcommand{\ints}{\mathbb{Z}}
\newcommand{\m}{\mathfrak m}
\def\to{\longrightarrow}
\DeclareMathOperator{\h}{H}
\DeclareMathOperator{\depth}{depth}
\title[Bounding Hilbert coefficients of parameter ideals]
{Bounding Hilbert coefficients of parameter ideals}
\date{\today}
\keywords{Hilbert-Samuel polynomial, Hilbert coefficients, local cohomology module}
\subjclass[2010]{Primary: 13D40}
\author[Saikia] {Anupam Saikia}
\address{Department of Mathematics, Indian Institute of Technology Guwahati, Assam 781039, India}
\email{a.saikia@iitg.ac.in}
\author[Saloni] {Kumari Saloni}
\address{Department of Mathematics, Indian Institute of Technology Guwahati, Assam 781039, India}
\email{saloni.kumari@iitg.ac.in, sin.saloni@gmail.com}
\begin{document}
\sloppy
 \begin{abstract}
 Let $(R,\m)$ be a Noetherian local ring of dimension $d>0$ and $\depth R\geq d-1$. Let $Q$ be a parameter ideal of $R$. In this paper, we derive uniform
 lower and upper bounds for the Hilbert coefficient $e_i(Q)$ under certain assumptions on the depth of associated graded ring $G(Q)$. For $2\leq i\leq d $, we show that
 (1) $e_i(Q)\leq 0$ provided $\depth G(Q)\geq d-2$ and (2) $e_i(Q)\geq -\lambda_R(\h_{\m}^{d-1}(R))$ provided $\depth G(Q)\geq d-1$. It is proved that
 $e_3(Q)\leq 0$.  Further, we obtain a necessary condition for the vanishing of the last coefficient $e_d(Q)$. As a consequence, we characterize the vanishing of $e_2(Q)$. Our results
 generalize \cite[Theorem 3.2]{goto-ozeki} and \cite[Corollary 4.5]{Lori}.
 \end{abstract}
 \maketitle
\section{Introduction}
Let $(R,\m)$ be a Noetherian local ring of dimension $d>0$ and $Q$ an $\m$-primary ideal of $R$. Let $\lambda_R(M)$ denote the length of an $R$-module $M.$
The Hilbert-Samuel function of $Q$ is defined as $H(Q,n)=\lambda_R(R/Q^n)$ for $n\in\ints$. It is well known that $H(Q,n)$  coincides with a polynomial
$P(Q,n)$ of degree $d$ for all $n\gg 0.$ The polynomial $P(Q,x)$
is called the Hilbert-Samuel polynomial of $Q$. We may write
\begin{equation*}
P(Q,x)=e_0(Q)\binom{x+d-1}{d}-e_1(Q)\binom{x+d-2}{d-1}+\ldots+(-1)^de_d(Q)
\end{equation*}
for unique integers $e_i(Q)$ known as the Hilbert coefficients of $Q$.

 In the classical case of a Cohen-Macaulay
local ring, relations among various Hilbert coefficients and bounds for them have been explored by several authors. Northcott's inequality \cite{northcott}
$e_1(Q)\geq e_0(Q)-\lambda_R(R/Q)$ is one of the first results in this direction. It was improved by M. E. Rossi in \cite{rossi-reduction}.
Several bounds on $e_1(Q)$ in terms of $e_0(Q)$ exist in literature, see \cite{elias}, \cite{elias2}, \cite{hanumanthu-huneke}, \cite{RV} and \cite{RV2}.
For example, Rossi and Valla in \cite[Proposition 2.10]{RV} proved that $e_1(Q)\leq \binom{e_0(Q)-k+1}{2}$ where
 $Q\subseteq \m^k.$
Such bounds are useful for examining the finiteness of Hilbert functions of ideals with fixed multiplicity, see \cite{srinivas-trivedi}.
With $R$ Cohen-Macaulay, it is known that $e_1(Q)$ and $e_2(Q)$ are non-negative, due to Northcott \cite{northcott} and Narita \cite{narita} respectively.
However the higher coefficients are not necessarily non-negative. Marley \cite[Example 2.3]{Marley} gave an example of a Cohen-Macaulay local ring and an
$\m$-primary ideal $Q$ with $e_3(Q)<0$.
The higher coefficients are not yet explored much except when the associated graded ring have high depth. By depth of a standard graded ring with a
unique graded maximal ideal, we mean the grade of the unique maximal ideal. Let
$G(Q)=\mathop{\oplus}\limits_{n\geq 0} Q^n/Q^{n+1}$ denote the associated graded ring of $Q$. Marley \cite[Corollary 2.9]{Marley} showed
that in a Cohen-Macaulay local ring, $e_i(Q)\geq 0$ for $0\leq i\leq d$ provided depth of $G(Q)$ is at least $d-1$.

The case when $R$ is not Cohen-Macaulay is quite different. Let $Q$ be a parameter ideal of $R$ hereafter.
In this paper, we find uniform lower and upper bounds for the coefficients $e_i(Q)$ for $2\leq i\leq d$. In \cite[Theorem 3.5]{msv},
  Mandal, Singh and Verma  proved that $e_1(Q)\leq 0$.
 Mccune in \cite[Theorem 3.5]{Lori} showed that if $\depth R\geq d-1$ then $e_2(Q)\leq 0$. In the same paper, she proved that $e_i(Q)\leq 0$  for $2\leq i\leq d$
 provided $\depth G(Q)\geq d-1$.
We improve upon Mccune's result by relaxing the hypothesis to $\depth G(Q)\geq d-2$.
If $\depth G(Q)\geq d-1$, we provide a uniform lower bound for $e_i(Q)$ which is independent of $Q$.
 Let
$H_\m^i(*)$ denote the $i$-th local cohomology functor with support in the maximal ideal $\m$.
Goto and Ozeki \cite[Theorem 3.2]{goto-ozeki}
showed that in two dimensional local ring with $\depth R\geq 1$, $-\lambda_R(\h_{\m}^1(R))\leq e_2(Q)\leq 0$. We extend their result to rings of  dimension
$d$ and $\depth$ at least $d-1$.  Our proofs are essentially inspired from the methods developed in \cite{goto-ozeki}.

Let $\mathcal{R}=R(I)=\mathop\oplus\limits_{n=0}^\infty I^nt^n\subseteq R[t]$ and $\mathcal{R}^*=R^*(I)=\mathop\oplus\limits_{n\in\ints}I^nt^n\subseteq R[t,t^{-1}]$
denote the Rees algebra and the extended Rees algebra respectively of an ideal $I$.
 We put $\mathcal{M}=\m \mathcal{R}+\mathcal{R_{+}}$ where $\mathcal{R_{+}}=\mathop\oplus\limits_{n>0}\mathcal{R}_n$
is the irrelevant ideal of the Rees algebra $\mathcal{R}$. Let $[T]_n$ denote the $n$-th graded piece of a graded $\mathcal{R}$-module $T$.
This paper is organized as follows.

In Section \ref{section-formula}, we discuss some lemmas concerning the local cohomology modules $\h_{\mathcal{M}}^i(G(I))$ of $G(I)$ with support in
$\mathcal{M}$ for an ideal $I$.
These results mainly develop the setting for the proofs of our main results in
subsequent sections. The content of this section is very basic but we include them for clarity.

In Section \ref{bounding-the-hilbert-coefficients}, we prove that $e_d(Q)\leq 0$ for a parameter ideal $Q$
if $[\h_{\mathcal{M}}^i(G(Q))]_n=0$ for all $n\leq -1$ and $0\leq i\leq d-3$ (Theorem \ref{theorem-main-(-1)}). Note that
this condition holds if $\depth G(Q)\geq d-2.$ As a consequence, we prove the following theorem.
\begin{theorem}
   Let $(R,\m)$ be a Noetherian local ring of dimension $d\geq 3$ and $\depth R\geq d-1$. Let $Q$ be a parameter ideal. Then $e_3(Q)\leq 0.$
\end{theorem}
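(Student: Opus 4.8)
The plan is to induct on $d$, using Theorem \ref{theorem-main-(-1)} as the base case and peeling off one superficial nonzerodivisor at a time to drop the dimension, arranging that $e_3$ and the hypothesis $\depth R\geq d-1$ are both preserved along the way. The key observation driving the argument is that $e_3(Q)$ coincides with the \emph{last} coefficient $e_d(Q)$ exactly when $d=3$, so that is where Theorem \ref{theorem-main-(-1)} can be applied directly; for larger $d$ we must first transport $e_3$ down to dimension $3$.

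First I would dispose of the base case $d=3$. Here $e_3(Q)=e_d(Q)$, so it suffices to check the hypothesis of Theorem \ref{theorem-main-(-1)}, which for $d=3$ requires only that $[\h_{\mathcal{M}}^i(G(Q))]_n=0$ for all $n\leq -1$ in the single range $0\leq i\leq d-3=0$, i.e. for $i=0$. Since $G(Q)=\bigoplus_{n\geq 0}Q^n/Q^{n+1}$ is concentrated in nonnegative degrees, every graded submodule of it, and in particular $\h_{\mathcal{M}}^0(G(Q))$, vanishes in negative degrees. Thus the hypothesis holds automatically and Theorem \ref{theorem-main-(-1)} yields $e_3(Q)\leq 0$.

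For the inductive step, assume $d\geq 4$ and that the statement is known in dimension $d-1$. After the faithfully flat extension $R\to R(t)=R[t]_{\m R[t]}$, which leaves $\dim R$, $\depth R$ and all the $e_i(Q)$ unchanged, we may assume the residue field is infinite. Because $\depth R\geq d-1\geq 2$ we have $\m\notin\ass R$, so $Q\not\subseteq\p$ for every $\p\in\ass R$; by prime avoidance a general element $x\in Q\setminus\m Q$ is simultaneously superficial for $Q$ and avoids all associated primes, hence is a superficial nonzerodivisor. Setting $\bar R=R/(x)$ and $\bar Q=Q/(x)$, the first-difference identity $\lambda_R(R/Q^n)-\lambda_R(R/Q^{n-1})=\lambda_{\bar R}(\bar R/\bar Q^n)$ for $n\gg 0$ — valid since $(Q^n:_R x)=Q^{n-1}$ for $n\gg 0$ when $x$ is a superficial nonzerodivisor — gives $e_i(Q)=e_i(\bar Q)$ for $0\leq i\leq d-1$; in particular $e_3(Q)=e_3(\bar Q)$ because $3\leq d-1$. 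Moreover $\dim\bar R=d-1$ and $\depth\bar R=\depth R-1\geq d-2=\dim\bar R-1$, so $\bar R$ again satisfies our standing hypotheses, and the inductive hypothesis delivers $e_3(\bar Q)\leq 0$, whence $e_3(Q)\leq 0$.

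I expect the only genuinely technical point to be the reduction identity $e_3(Q)=e_3(\bar Q)$ in the non-Cohen-Macaulay setting, which amounts to controlling the kernel $(Q^n:_R x)/Q^{n-1}$ of multiplication by $x$; this is where the choice of $x$ as both superficial and a nonzerodivisor is essential, and it is precisely the condition $\depth R\geq d-1$ that furnishes such an $x$ (via $\depth R\geq 1$) and is itself inherited by $\bar R$, keeping the induction self-sustaining. It is worth emphasizing that the preserved range is only $0\leq i\leq d-1$, so the \emph{last} coefficient $e_d$ is not transported by this reduction — this is exactly why the argument must bottom out in dimension $3$, where $e_3$ is the last coefficient and Theorem \ref{theorem-main-(-1)} applies.
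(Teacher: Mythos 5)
Your proposal is correct and follows essentially the same route as the paper: the paper's proof also reduces modulo superficial elements to the case $d=3$ and then invokes Theorem \ref{theorem-main-(-1)}, whose hypothesis is vacuous there since $b_0(G(Q))\geq 0$ automatically. Your write-up merely makes explicit the details (preservation of $e_3$, of $\depth\geq\dim-1$, and of the parameter-ideal property under the reduction) that the paper leaves implicit.
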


For the higher coefficients $e_i(Q)$, we are able to prove the following result.
\begin{theorem}
  Let $(R,\m)$ be a Noetherian local ring of dimension $d\geq 2$ and $\depth R\geq d-1$. Let $Q$ be a parameter ideal of $R$.
  \begin{enumerate}
   \item Suppose $\depth(G(Q))\geq d-2$. Then $ e_i(Q)\leq 0$ for $2\leq i\leq d$.
   \item Suppose $\depth(G(Q))\geq d-1$. Then $ e_i(Q)\geq -\lambda_R(\h_{\m}^{d-1}(R))$ for $2\leq i\leq d$.
  \end{enumerate}
\end{theorem}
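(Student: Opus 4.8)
The plan is to prove both statements by lowering the dimension through general (superficial) elements, so that each intermediate coefficient becomes the \emph{top} coefficient of a smaller ring, and then to treat the top coefficient $e_d(Q)$ by a direct cohomological argument. Fix a superficial element $x\in Q$ chosen generically in $Q\setminus \m Q$; since $d\geq 2$ forces $\depth R\geq d-1\geq 1$, the maximal ideal is not associated to $R$, so $x$ may be taken to be a nonzerodivisor on $R$, and under the depth hypotheses its initial form $x^{*}\in G(Q)$ is a nonzerodivisor on $G(Q)$. Writing $\bar R=R/(x)$ and $\bar Q=Q\bar R$, one then has $G(\bar Q)\cong G(Q)/(x^{*}G(Q))$, $e_j(\bar Q)=e_j(Q)$ for $0\leq j\leq d-1$, and the depth of the associated graded ring drops by exactly one. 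I first record that these reductions preserve all hypotheses: $\dim\bar R=d-1$, $\depth\bar R=\depth R-1\geq (d-1)-1$, and $\depth G(\bar Q)=\depth G(Q)-1$, which is $\geq (d-1)-2$ in case (1) and $\geq (d-1)-1$ in case (2).

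For part (1), the case $i=d$ is Theorem \ref{theorem-main-(-1)} applied to $Q$ itself. For $2\leq i\leq d-1$ (so necessarily $d\geq 3$ and $\depth G(Q)\geq d-2\geq 1$), iterate the reduction $d-i$ times along a superficial sequence $x_1,\dots,x_{d-i}$ whose initial forms form a regular sequence on $G(Q)$; this is possible because $\depth G(Q)\geq d-2\geq d-i$. The result is a local ring of dimension $i$ with $\depth\geq i-1$ and $\depth G\geq i-2$, in which $e_i(Q)$ equals the top coefficient $e_i(\bar Q)$. The vanishing $[\h_{\mathcal{M}}^{j}(G(\bar Q))]_n=0$ for $n\leq -1$ and $0\leq j\leq i-3$ then holds, so Theorem \ref{theorem-main-(-1)} gives $e_i(\bar Q)\leq 0$, whence $e_i(Q)\leq 0$.

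For part (2), argue by induction on $d$, the base case $d=2$ being \cite[Theorem 3.2]{goto-ozeki}. For the inductive step with $2\leq i\leq d-1$, cut by a single general element $x$ as above; the inductive hypothesis in dimension $d-1$ yields $e_i(Q)=e_i(\bar Q)\geq -\lambda_{\bar R}(\h_{\m}^{d-2}(\bar R))$. To convert this into the stated bound, apply local cohomology to $0\to R\xrightarrow{x}R\to\bar R\to 0$: since $\depth R\geq d-1$ gives $\h_{\m}^{d-2}(R)=0$, the connecting homomorphism embeds $\h_{\m}^{d-2}(\bar R)\hookrightarrow \h_{\m}^{d-1}(R)$ as the kernel of multiplication by $x$, so $\lambda_{\bar R}(\h_{\m}^{d-2}(\bar R))\leq \lambda_R(\h_{\m}^{d-1}(R))$ and the bound follows.

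The remaining and decisive case is the top coefficient $e_d(Q)\geq -\lambda_R(\h_{\m}^{d-1}(R))$, which cannot be reached by lowering the dimension and is where I expect the main obstacle. Here I would proceed directly in the spirit of \cite{goto-ozeki}: the hypothesis $\depth G(Q)\geq d-1$ forces $\h_{\mathcal{M}}^{j}(G(Q))=0$ for $j<d-1$, so in the Grothendieck--Serre expression for $P(Q,n)-\lambda_R(R/Q^{n+1})$ only the pieces $\h_{\mathcal{M}}^{d-1}(G(Q))$ and $\h_{\mathcal{M}}^{d}(G(Q))$ survive, and evaluating at $n=0$ rewrites $(-1)^{d}e_d(Q)=(-1)^{d}P(Q,0)$ in terms of the finitely many graded lengths of $\h_{\mathcal{M}}^{d-1}(G(Q))$ via the lemmas of Section \ref{section-formula}. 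The crux is to bound these graded lengths by $\lambda_R(\h_{\m}^{d-1}(R))$ through the canonical comparison between $G(Q)$ and $R$; this length comparison is the technical heart of the argument, the rest being the bookkeeping already assembled in Sections \ref{section-formula} and \ref{bounding-the-hilbert-coefficients}.
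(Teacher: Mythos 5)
Your part (1) is correct and is essentially the paper's own argument (Corollary \ref{corr-ei-lesseq-0}): cut by $d-i$ superficial elements whose initial forms form a $G(Q)$-regular sequence, so that $e_i(Q)$ becomes the top coefficient of an $i$-dimensional ring satisfying the hypotheses of Theorem \ref{theorem-main-(-1)}. Likewise your treatment of part (2) for $2\leq i\leq d-1$ — cut by one superficial element and compare $\h_{\m}^{d-2}(R/(x))$ with $\h_{\m}^{d-1}(R)$ via the long exact sequence — is exactly the paper's mechanism (Corollary \ref{corr-ei-grteq} together with \eqref{eqn-length-of-H-m-decreasing}).

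The genuine gap is the case $i=d$ of part (2), the bound $e_d(Q)\geq -\lambda_R(\h_{\m}^{d-1}(R))$: this is precisely the paper's Theorem \ref{theorem-main-2}, and you have not proved it — you defer its ``crux'' to an unspecified ``canonical comparison between $G(Q)$ and $R$,'' and no such formal comparison exists. Two concrete problems. First, your plan to work with $G(Q)$ itself breaks at the start: identifying $(-1)^dP(Q,0)$ with $\sum_{i}(-1)^i\lambda_R([\h_{\mathcal{M}}^i(G(Q))]_0)$ requires $[\h_{\mathcal{M}}^i(G(Q))]_n=0$ for all $n\geq 1$ (this is what Lemma \ref{lemma-1} plus Blancafort's theorems need), which can fail for $Q$; the paper therefore replaces $Q$ by $I=Q^l$ with $l\gg 0$ (harmless, since $e_d(Q^l)=e_d(Q)$), where Lemma \ref{lemma-4} supplies $a_i(G(I))\leq 0$ for all $i$, $a_d(G(I))\leq -1$, $\depth G(I)\geq d-1$, and the reduction property $I^d=(y_1,\ldots,y_d)I^{d-1}$. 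Second — the real missing idea — the surviving quantity $\lambda_R([\h_{\mathcal{M}}^{d-1}(G(I))]_0)$ is not bounded by $\lambda_R(\h_{\m}^{d-1}(R))$ by any bookkeeping from Section \ref{section-formula}; the paper needs Lemma \ref{lemma-main-1}, an explicit isomorphism $\rho$ identifying $[\h_{\mathcal{M}}^{d-1}(G(I))]_0$ with the colon quotient $\bigl(((y_1,\ldots,y_{d-1}):y_d)\cap (I^{d-1}+(y_1,\ldots,y_{d-1}))\bigr)/(y_1,\ldots,y_{d-1})$, which embeds in $\h_{\m}^0(R/(y_1,\ldots,y_{d-1}))$, whose length is in turn at most $\lambda_R(\h_{\m}^{d-1}(R))$ by iterating \eqref{eqn-length-of-H-m-decreasing}. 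Finally, note that this missing case is not an isolated corner of the statement: your induction on $d$ invokes, in dimension $d-1$, the bound for the top coefficient $e_{d-1}$, so without the top-coefficient theorem the induction yields nothing beyond $e_2$ (the $d=2$ base case cited from \cite{goto-ozeki}); the entire part (2) collapses to the unproven Theorem \ref{theorem-main-2}.
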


In Section \ref{vanishing-of-coefficients}, we discuss the vanishing of the last coefficient $e_d(Q)$. In \cite[Theorem 3.2]{goto-ozeki}, Goto and Ozeki found a necessary and sufficient condition
for the vanishing of $e_2(Q)$ in rings of dimension two and depth at least one. We generalize their result in two directions. In Theorem \ref{theorem-vanishing-of-ed}, we find a necessary condition for the
equality $e_d(Q)=0$ provided $\depth R\geq d-1$. Let $Q=(x_1,\ldots,x_d)$ with
$x_1^*,\ldots,x_{d-1}^*$ a superficial sequence. Suppose $\depth G(Q)\geq d-2$ and $\h_{\mathcal{M}}^{d-2}(G(Q))$ is finitely graded, then we prove that
$e_d(Q)=0$ implies $x_1^l,\ldots,x_{d-1}^{l},x_d^{(d-1)l}$ is a $d$-sequence for all integers $l\geq 1$. Further in Theorem \ref{theorem-e2}, we extend \cite[Theorem 3.2]{goto-ozeki}
to dimension $d\geq 2$ by providing similar equivalent conditions for the vanishing of $e_2(Q)$. Our result on $e_2(Q)$ also includes \cite[Theorem 3.2]{Lori}.

We refer to \cite{bruns-herzog} for undefined terms.
\section{Preliminary results}\label{section-formula}
In this section, we discuss few lemmas which are the key steps for the results of subsequent sections.
For this section, let $I\subseteq \m$ be an arbitrary ideal of $R$.  We have
$$H_{\mathcal{M}}^i(G(I))=H_{\mathcal{R}_{+}}^i(G(I))=H_{G_{+}}^i(G(I)).$$
It is well known that there exists $m_I\in\ints$  such that \begin{equation}\label{eqn-6-09}
                  [\h_{\mathcal{M}}^i(G(I))]_{n+1}=0 \text{ for all } n\geq m_I \text{ and for all } i\in\ints.
                 \end{equation}
For an element $0\neq x\in R$,
let $x^*$ denote the  initial form of $x$ in $G(I)$, i.e.
the image of $x$ in $G(I)_i$  where $i$ is the unique integer such that $x\in I^i\setminus I^{i+1}$.

\begin{lemma}\label{lemma-3}
Let $R$ be a Noetherian local ring and $I$ an ideal of $R$.
Let $x_1,\ldots,x_r\in I\setminus I^2$ such that $x_1^*,\ldots,x_r^*$ is a
regular sequence in $G(I)$. Then, for $1\leq j\leq r$,
\begin{equation*}
 [\h_{\mathcal{M}}^j(G(I))]_{m_I}\cong  [\h_{\mathcal{M}}^{j-1}(G(IR_1))]_{{m_I}+1}\cong\ldots\cong [\h_{\mathcal{M}}^0(G(IR_j))]_{m_I+j}
\end{equation*}
where $R_j=R/(x_1,\ldots,x_j)$. Moreover, for all $n\geq m_I+j$ and for all $i\in\ints$,
\begin{equation*}
[\h_{\mathcal{M}}^i(G(IR_j))]_{n+1}=0.
\end{equation*}
\end{lemma}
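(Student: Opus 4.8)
The plan is to reduce to the one-element case and then iterate. The crucial structural input is that a degree-one nonzerodivisor in $G(I)$ yields a short exact sequence relating $G(I)$ to the associated graded ring of the quotient ring. Since $x_1^*,\ldots,x_r^*$ is a regular sequence, $x_1^*\in G(I)_1$ is a nonzerodivisor, and by the standard Valabrega--Valla criterion one gets $I^n\cap(x_1)=x_1I^{n-1}$ for all $n$, whence the natural map $G(I)/x_1^*G(I)\to G(IR_1)$ (where $IR_1$ carries the filtration induced by $I$) is an isomorphism. This produces a graded short exact sequence of $G(I)$-modules
\begin{equation*}
0\to G(I)(-1)\xrightarrow{\ \cdot x_1^*\ }G(I)\to G(IR_1)\to 0,
\end{equation*}
where the shift $(-1)$ records that $x_1^*$ has degree one. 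Local cohomology with respect to $\mathcal{M}$ is computed by $G_+$ and is graded, and $\h_{\mathcal{M}}^i(G(IR_1))$ is unambiguous since $G_+$ surjects onto the irrelevant ideal of $G(IR_1)$.

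First I would isolate a one-step lemma: if $m\in\ints$ satisfies $[\h_{\mathcal{M}}^i(G(I))]_{n+1}=0$ for all $n\geq m$ and all $i$, then (a) $[\h_{\mathcal{M}}^{k}(G(I))]_{m}\cong[\h_{\mathcal{M}}^{k-1}(G(IR_1))]_{m+1}$ for every $k\geq 1$, and (b) $[\h_{\mathcal{M}}^{i}(G(IR_1))]_{n+1}=0$ for all $n\geq m+1$ and all $i$. Both parts come from reading the long exact sequence in local cohomology attached to the displayed short exact sequence in a fixed internal degree. For (a), in internal degree $m+1$ the connecting map is $[\h_{\mathcal{M}}^{k-1}(G(IR_1))]_{m+1}\to[\h_{\mathcal{M}}^{k}(G(I))]_{m}$, where the degree drops by one because of the shift $(-1)$; the two flanking terms $[\h_{\mathcal{M}}^{k-1}(G(I))]_{m+1}$ and $[\h_{\mathcal{M}}^{k}(G(I))]_{m+1}$ vanish by hypothesis (take $n=m$), so the connecting map is an isomorphism. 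For (b), in any internal degree $p\geq m+2$ the term $[\h_{\mathcal{M}}^{i}(G(IR_1))]_{p}$ is sandwiched between $[\h_{\mathcal{M}}^{i}(G(I))]_{p}$ and $[\h_{\mathcal{M}}^{i+1}(G(I))]_{p-1}$, both of which vanish since $p,p-1\geq m+1$.

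Then I would iterate, applying the one-step lemma with $I$ replaced by $IR_1,IR_2,\ldots$ and $m$ replaced by $m_I,m_I+1,\ldots$. Starting from $m=m_I$, which satisfies the hypothesis by \eqref{eqn-6-09}, part (a) with $k=j$ gives the first isomorphism $[\h_{\mathcal{M}}^{j}(G(I))]_{m_I}\cong[\h_{\mathcal{M}}^{j-1}(G(IR_1))]_{m_I+1}$, while part (b) shows $m_I+1$ is a valid constant for $IR_1$. To repeat the step I must check that the residual data descend: under the identification $G(I)/x_1^*G(I)=G(IR_1)$, the images of $x_2^*,\ldots,x_r^*$ form a regular sequence, and each such image is genuinely the initial form in $G(IR_1)$ of the image of $x_i$ in $R_1$ (were it not, $x_i^*$ would be a scalar multiple of $x_1^*$, contradicting regularity of the sequence). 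Hence the one-step lemma applies verbatim to $IR_1$ with constant $m_I+1$, giving the next isomorphism and vanishing constant $m_I+2$, and so on; after $j$ applications both the full chain of isomorphisms and the vanishing of $[\h_{\mathcal{M}}^i(G(IR_j))]_{n+1}$ for $n\geq m_I+j$ follow.

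The main obstacle, and the only non-formal ingredient, is the identification $G(IR_1)\cong G(I)/x_1^*G(I)$ together with the verification that the regular sequence and the initial-form structure descend correctly under it. Once the short exact sequence is in hand, the rest is a routine degree-by-degree chase through the long exact sequence combined with the eventual vanishing recorded in \eqref{eqn-6-09}.
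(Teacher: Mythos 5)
Your proposal is correct and follows essentially the same route as the paper: the short exact sequence $0\to G(I)(-1)\xrightarrow{x_1^*}G(I)\to G(IR_1)\to 0$, a degree-by-degree chase through its long exact cohomology sequence to get both the connecting isomorphism and the shifted vanishing, and then induction (your ``iteration'' of the one-step lemma is the paper's induction on $r$ in different packaging). The only difference is that you spell out details the paper leaves implicit, namely the Valabrega--Valla identification $G(IR_1)\cong G(I)/x_1^*G(I)$ and the check that the images of $x_2^*,\ldots,x_r^*$ really are the initial forms of the images of $x_2,\ldots,x_r$ in $R_1$ --- both verifications are sound.
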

\begin{proof}
 We apply induction on $r$.  Consider the following exact sequence and the induced
 long exact sequence of local cohomology modules.
 \begin{align}
  0&\to G(I)(-1)\xrightarrow{x_1^*} G(I)\to G(IR_1)\to 0\nonumber\\
  \ldots\to \h_{\mathcal{M}}^i(G(I))&\to \h_{\mathcal{M}}^i(G(IR_1))\to  \h_{\mathcal{M}}^{i+1}(G(I))(-1)\to \h_{\mathcal{M}}^{i+1}(G(I))
  \to\ldots\label{long-exact-seq-1}
  \end{align}

 Since $[\h_{\mathcal{M}}^i(G(I))]_{n+1}=0$ for all $n\geq m_I$ and $i\in\ints$, we get from \eqref{long-exact-seq-1} that  for all $i\in\ints$,
 \begin{align}
[\h_{\mathcal{M}}^i(G(I))]_{m_I}\cong [\h_{\mathcal{M}}^{i-1}(G(IR_1))]_{m_I+1}\text{ and }\label{induction-eqn-1}\\
   [\h_{\mathcal{M}}^i(G(IR_1))]_{n+1}=0 \text{ for } n\geq m_I + 1.\nonumber
 \end{align}
 By putting $i=1$ in \eqref{induction-eqn-1}, we get the result for $r=1$.

  Suppose $r>1$ and the assertion holds for $r-1$.
 Since $x_1^*,\ldots,x_r^*$ is a regular sequence in $G(I)$, we have $G(IR_1)\cong G(I)/x_1^*G(I)$ and $x_2^*,\ldots,x_r^*$ is
a regular sequence in  $G(IR_1)$. By induction hypothesis, for $1\leq k\leq r-1$,
 \begin{gather}
[\h_{\mathcal{M}}^k(G(IR_1))]_{m_I+1}\cong  [\h_{\mathcal{M}}^{k-1}(G(IR_2))]_{m_I+2}\cong \ldots\cong [\h_{\mathcal{M}}^0(G(IR_{k+1}))]_{m_I+1+k}\text{ and }\label{induction-eqn-3}\\
\text{[}\h_{\mathcal{M}}^i(G(IR_{k+1}))]_{n+1}=0 \text{ for all } n\geq m_I+1+k \text{ and for all } i\in\ints. \label{induction-eqn-4}
 \end{gather}
Now  let $1\leq j\leq r$.
We may assume that $j>1$. Then  \eqref{induction-eqn-1}, \eqref{induction-eqn-3}
and \eqref{induction-eqn-4} with $k=j-1$ give
\begin{gather*}
  [\h_{\mathcal{M}}^j(G(I))]_{m_I}\cong  [\h_{\mathcal{M}}^{j-1}(G(IR_1))]_{m_I+1}\cong\ldots\cong [\h_{\mathcal{M}}^0(G(IR_j))]_{m_I+j}\text{ and }\\
 \text{[}\h_{\mathcal{M}}^i(G(IR_j))]_{n+1}=0  \text{ for all } n\geq m_I+j \text{ and for all } i\in\ints.
  \end{gather*}
\end{proof}
The next lemma relates the local cohomology of Rees algebra and the associated graded ring.
\begin{lemma}\label{lemma-1}
Let $R$ be a Noetherian local ring and $I$ an ideal of $R$. Then
$$[\h_{\mathcal{R}_{+}}^i(\mathcal{R})]_n\cong [\h_{\mathcal{R}_{+}}^i(G(I))]_n$$ for all   $n>\max\{ m_I-1,-1\}$ and for all $i\in\ints$.
\end{lemma}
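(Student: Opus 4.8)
The plan is to compare $\mathcal{R}$ and $G(I)$ degree by degree through the two standard short exact sequences of graded $\mathcal{R}$-modules
\[
0\to\mathcal{R}_{+}\to\mathcal{R}\to R\to 0
\qquad\text{and}\qquad
0\to I\mathcal{R}\to\mathcal{R}\to G(I)\to 0,
\]
where $R=\mathcal{R}/\mathcal{R}_{+}$ sits in degree $0$ and $G(I)=\mathcal{R}/I\mathcal{R}$. The conceptual crux is the graded isomorphism $I\mathcal{R}\cong\mathcal{R}_{+}(1)$ (both have $n$-th graded piece $I^{n+1}$), which turns the second sequence into a device relating \emph{consecutive} graded strands of $\h_{\mathcal{R}_{+}}^{\bullet}(\mathcal{R})$, and hence lets the two sequences interlock. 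Throughout I use $\h_{\mathcal{R}_{+}}^i=\h_{\mathcal{M}}^i$.

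First I would extract from the first sequence that $[\h_{\mathcal{R}_{+}}^i(\mathcal{R}_{+})]_n\cong[\h_{\mathcal{R}_{+}}^i(\mathcal{R})]_n$ for every $i$ and every $n\neq 0$. Indeed $R$ is annihilated by $\mathcal{R}_{+}$ and concentrated in degree $0$, so $\h_{\mathcal{R}_{+}}^j(R)$ equals $R$ for $j=0$ and vanishes for $j>0$, all living in degree $0$; the long exact sequence then forces the claimed isomorphism away from degree $0$. Substituting this together with $I\mathcal{R}\cong\mathcal{R}_{+}(1)$ into the long exact sequence of the second short exact sequence yields, for every $n\geq 0$ and every $i$, an exact strand
\[
[\h_{\mathcal{R}_{+}}^{i-1}(G(I))]_{n}\to[\h_{\mathcal{R}_{+}}^{i}(\mathcal{R})]_{n+1}\to[\h_{\mathcal{R}_{+}}^{i}(\mathcal{R})]_{n}\to[\h_{\mathcal{R}_{+}}^{i}(G(I))]_{n}\to[\h_{\mathcal{R}_{+}}^{i+1}(\mathcal{R})]_{n+1}.
\]

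Next I would pin down the vanishing of $\h_{\mathcal{R}_{+}}^{\bullet}(\mathcal{R})$ in high degrees. Since $\mathcal{R}$ is a finitely generated graded module over the Noetherian standard graded $R$-algebra $\mathcal{R}$, each $\h_{\mathcal{R}_{+}}^i(\mathcal{R})$ vanishes in all sufficiently large degrees (finiteness of Castelnuovo--Mumford regularity). Combining this with the defining property of $m_I$ recorded in \eqref{eqn-6-09}, namely $[\h_{\mathcal{R}_{+}}^i(G(I))]_m=0$ for all $m>m_I$, the displayed strand with $n\geq\max\{m_I+1,0\}$ shows that the maps $[\h_{\mathcal{R}_{+}}^i(\mathcal{R})]_{n+1}\to[\h_{\mathcal{R}_{+}}^i(\mathcal{R})]_n$ are isomorphisms in that range (both flanking $G(I)$-terms vanish). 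Running this stabilization downward from a degree where the module is already zero yields $[\h_{\mathcal{R}_{+}}^i(\mathcal{R})]_m=0$ for all $m\geq\max\{m_I+1,0\}$ and all $i$.

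Finally, for $n>\max\{m_I-1,-1\}$, that is $n\geq\max\{m_I,0\}$, both flanking terms $[\h_{\mathcal{R}_{+}}^i(\mathcal{R})]_{n+1}$ and $[\h_{\mathcal{R}_{+}}^{i+1}(\mathcal{R})]_{n+1}$ vanish by the previous step, so the strand collapses to $0\to[\h_{\mathcal{R}_{+}}^i(\mathcal{R})]_n\to[\h_{\mathcal{R}_{+}}^i(G(I))]_n\to 0$, giving the asserted isomorphism. I expect the main obstacle to be the degree bookkeeping at the boundary $n=m_I$, where neither cohomology module need vanish and the isomorphism is genuine rather than a comparison of zeros, together with propagating the regularity input downward to \emph{exactly} the stated range; the rest is a routine diagram chase once the shift $I\mathcal{R}\cong\mathcal{R}_{+}(1)$ is in hand.
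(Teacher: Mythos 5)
Your proposal is correct and follows essentially the same route as the paper: the same two short exact sequences (the paper writes the second directly as $0\to\mathcal{R}_{+}(1)\to\mathcal{R}\to G(I)\to 0$, which is your identification $I\mathcal{R}\cong\mathcal{R}_{+}(1)$), the same use of the degree-zero concentration of $R$ to identify $\h_{\mathcal{R}_{+}}^i(\mathcal{R}_{+})$ with $\h_{\mathcal{R}_{+}}^i(\mathcal{R})$ away from degree $0$, and the same descending stabilization from high-degree vanishing to conclude $[\h_{\mathcal{R}_{+}}^i(\mathcal{R})]_n=0$ for $n>\max\{m_I,0\}$, after which the second long exact sequence collapses to the asserted isomorphism. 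The only difference is organizational: you substitute the first identification into the second sequence to get a single five-term strand, while the paper keeps the $\mathcal{R}_{+}$-terms and combines the two isomorphisms at the end.
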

\begin{proof}
 Consider the following exact sequences with the canonical maps
 $$0\to \mathcal{R}_{+}\to \mathcal{R}\to R\to 0\text{ and } 0\to \mathcal{R}_{+}(1)\to \mathcal{R}\to G(I) \to 0$$
 and apply the functor $H^i_{\mathcal{R}_{+}}(*)$ to get
 \begin{gather}
  \hspace{-3cm}\ldots\to H^{i-1}_{\mathcal{R}_{+}}(R)\to H^i_{\mathcal{R}_{+}}(\mathcal{R}_{+})\to H^i_{\mathcal{R}_{+}}(\mathcal{R})\to H^{i}_{\mathcal{R}_{+}}(R)\to \ldots\label{exact-seq1}\\
 \to H^{i-1}_{\mathcal{R}_{+}}(G(I))\to H^{i}_{\mathcal{R}_{+}}(\mathcal{R}_{+})(1)\to H^{i}_{\mathcal{R}_{+}}(\mathcal{R})\to
  H^{i}_{\mathcal{R}_{+}}(G(I))\to H^{i+1}_{\mathcal{R}_{+}}(\mathcal{R}_{+})(1)\to\label{exact-seq2}
 \end{gather}
Since $[H^{i}_{\mathcal{R}_{+}}(G(I))]_n=0$ for all $i\in\ints$ and for all $n>m_I$, we get from exact sequence \eqref{exact-seq2} that
\begin{equation}
[H^{i}_{\mathcal{R}_{+}}(\mathcal{R}_{+})]_{n+1}\cong[H^{i}_{\mathcal{R}_{+}}(\mathcal{R})]_n\label{eqn-chap-last-e1}
\end{equation}
for all $n> m_I$ and for all $i\in\ints$. Further by exact sequence \eqref{exact-seq1}, we have
\begin{equation}[H^{i}_{\mathcal{R}_{+}}(\mathcal{R}_{+})]_{n+1}\cong [H^{i}_{\mathcal{R}_{+}}(\mathcal{R})]_{n+1}\label{eqn-chap-last-e1-17/10-1}\end{equation}
for all  $n\geq 0$ and $i\in\ints$. By \eqref{eqn-chap-last-e1} and \eqref{eqn-chap-last-e1-17/10-1}, 
$[H^{i}_{\mathcal{R}_{+}}(\mathcal{R})]_n\cong [H^{i}_{\mathcal{R}_{+}}(\mathcal{R})]_{n+1}$ for all 
 $n>\max\{m_I,0\}$.
Since $[H^{i}_{\mathcal{R}_{+}}(\mathcal{R})]_n=0$ for all $n\gg 0$,
$[H^{i}_{\mathcal{R}_{+}}(\mathcal{R})]_n=0$ for all  $n>\max\{m_I,0\}$. Therefore 
by \eqref{eqn-chap-last-e1-17/10-1}, $[H^{i}_{\mathcal{R}_{+}}(\mathcal{R}_{+})(1)]_{n}=0$
for all   $n> \max\{m_I-1,-1\}$ and $i\in\ints$. Now by exact sequence \eqref{exact-seq2}, we get that
$$[\h_{\mathcal{R}_{+}}^i(\mathcal{R})]_n\cong [\h_{\mathcal{R}_{+}}^i(G(I))]_n$$
for all  $n> \max \{m_I-1,-1\}$ and for all $i\in\ints$.
\end{proof}
We set
\begin{align*}
 a_i\left(G(I)\right)&:= \sup\{n\in\ints: [\h_{\mathcal{R_+}}^i(G(I))]_n\neq0\}  \mbox{ and }\\
 b_i\left(G(I)\right)&:= \inf\{n\in\ints: [\h_{\mathcal{R_+}}^i(G(I))]_n\neq0\}.
\end{align*}
By convention, if $\h_{\mathcal{R_+}}^i(G(I))=0$ then we set $a_i(G(I))=-\infty$ and $b_i(G(I))=\infty$. Note that 
$b_0(G(I))\geq 0$.
The next lemma plays crucial role in most of our proofs.
 We show that given a parameter ideal $Q$ with $b_i(G(Q))\geq 0$ for $0\leq i\leq d-3,$
 $G(Q^l)$ has high depth for all $l\gg 0$.

 Recall that a {\it reduction} of an ideal $I$ is an ideal $J\subseteq I$ such that $I^{n+1}=JI^n$ for some $n\geq 0.$ A {\it minimal reduction} of $I$ is a reduction of $I$ which is minimal with respect to inclusion.
For a minimal reduction $J$ of $I$, {\it reduction number} of $I$ with respect to $J$, denoted by $r_J(I)$, is the least non-negative integer $n$ such that $I^{n+1}=JI^n$.
For an ideal $I$, let $\mu(I)$ denote the minimal number of generators of $I$.
\begin{lemma}\label{lemma-4}
  Let $R$ be a Noetherian local ring of dimension $d\geq 2$ and
$\depth R\geq d-1$. Let $Q$ be parameter ideal such that $b_i(G(Q))\geq 0$ for $0\leq i\leq d-3$. Then the following assertions hold.
 \begin{enumerate}[label=(\alph*)]
 \item \label{lemma-4-item-0} For all $l\gg 0$,
 \begin{align}\label{31-aug-6-e1}
  a_i(G(Q^l)))\leq \begin{cases}
      0 &\text{ for all } i\in\ints ,\\
      -1 & \text{ for } i=d, d-2
 \end{cases}
\end{align}
 Furthermore, $\h_{\mathcal{M}}^i(G(Q^l))=0$  for  $0\leq i\leq d-3.$ In particular, $\depth G(Q^l)\geq d-2$ for $l\gg 0.$

\item \label{lemma-4-item-3} Suppose $\h_{\mathcal{M}}^{d-2}(G(Q))$ is finitely graded and  $[\h_{\mathcal{M}}^{d-1}(G(Q^{l}))]_0=0$ whenever $l$ is sufficiently large. Then $\depth G(Q^{l_0})\geq d-1$ for any sufficiently large integer $l_{0}$.
\end{enumerate}
\end{lemma}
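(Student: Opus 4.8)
The plan is to reduce everything to the local cohomology of the (extended) Rees algebra and to exploit that $\mathcal{R}(Q^l)$ and $\mathcal{R}^*(Q^l)$ are the $l$-th Veronese subalgebras of $\mathcal{R}(Q)$ and $\mathcal{R}^*(Q)$. Since the Veronese functor is exact and commutes with $\h_{\mathcal{R}_{+}}^{\bullet}$, it will give degree-rescaling isomorphisms
$$[\h_{\mathcal{R}_{+}}^i(\mathcal{R}^*(Q^l))]_n\cong[\h_{\mathcal{R}_{+}}^i(\mathcal{R}^*(Q))]_{nl}\qquad(\text{and likewise for }\mathcal{R}),$$
for all $i,n$. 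Because each $\h_{\mathcal{R}_{+}}^i(\mathcal{R}^*(Q))$ vanishes in all large degrees, once $l$ exceeds every relevant top degree the only multiple of $l$ that can survive on the right is $nl=0$, so for $l\gg 0$ these modules collapse to a narrow band around degree $0$. I would move between $\h_{\mathcal{R}_{+}}^i(\mathcal{R}^*)$, $\h_{\mathcal{R}_{+}}^i(\mathcal{R})$ and $\h_{\mathcal{R}_{+}}^i(G)=\h_{\mathcal{M}}^i(G)$ via Lemma \ref{lemma-1} in high degrees and, in every degree, via the long exact sequence attached to $0\to\mathcal{R}^*(1)\xrightarrow{t^{-1}}\mathcal{R}^*\to G\to 0$, using that $\h_{\mathcal{R}_{+}}^i(\mathcal{R})\cong\h_{\mathcal{R}_{+}}^i(\mathcal{R}^*)$ for $i\geq 2$ (the quotient $\mathcal{R}^*/\mathcal{R}$ is $\mathcal{R}_{+}$-torsion) and that inverting $t^{-1}$ identifies $[\h_{\mathcal{R}_{+}}^i(\mathcal{R}^*)]_n\cong\h_{\m}^i(R)$ for $n\ll 0$.

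For (a), the assumption $\depth R\geq d-1$ forces $\h_{\m}^i(R)=0$ for $i\leq d-2$, so $\h_{\mathcal{R}_{+}}^i(\mathcal{R}^*(Q))$ vanishes in very negative degrees for these $i$. Together with $b_i(G(Q))\geq 0$ for $0\leq i\leq d-3$, the $t^{-1}$-recursion in the above sequence collapses in degrees $n\leq 0$ to injections, which I expect to force $\h_{\mathcal{R}_{+}}^i(\mathcal{R}(Q))$ to be concentrated in positive degrees for $2\leq i\leq d-2$; the rescaling then gives $\h_{\mathcal{R}_{+}}^i(\mathcal{R}(Q^l))=0$ for $l\gg 0$ and $2\leq i\leq d-2$. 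Substituting this into the sequence for $G(Q^l)$ kills $\h_{\mathcal{M}}^i(G(Q^l))$ for $0\leq i\leq d-3$, giving $\depth G(Q^l)\geq d-2$, while the degree shift in the same sequence yields $a_i(G(Q^l))\leq 0$ for all $i$. The sharper bound $a_{d-2}(G(Q^l))\leq -1$ should fall out because the degree-$0$ part of $\h_{\mathcal{M}}^{d-2}(G(Q^l))$ injects into $[\h_{\mathcal{R}_{+}}^{d-1}(\mathcal{R}(Q^l))]_{1}=[\h_{\mathcal{R}_{+}}^{d-1}(\mathcal{R}(Q))]_{l}=0$.

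For (b), part (a) already confines $\h_{\mathcal{M}}^{d-2}(G(Q^l))$ to degree $-1$ for $l\gg 0$, and the exact sequence should identify $[\h_{\mathcal{M}}^{d-2}(G(Q^l))]_{-1}$ with the kernel of $t^{-1}$ on $[\h_{\mathcal{R}_{+}}^{d-1}(\mathcal{R}(Q))]_0$. I would use finite-gradedness of $\h_{\mathcal{M}}^{d-2}(G(Q))$ to guarantee that the Veronese truncation indeed annihilates $\h_{\mathcal{R}_{+}}^{d-2}$, and translate the hypothesis $[\h_{\mathcal{M}}^{d-1}(G(Q^l))]_0=0$ through the same sequence into $[\h_{\mathcal{R}_{+}}^{d-1}(\mathcal{R}(Q))]_0=0$; this makes the remaining degree-$(-1)$ class vanish, so $\h_{\mathcal{M}}^{d-2}(G(Q^{l_0}))=0$ and $\depth G(Q^{l_0})\geq d-1$.

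I expect the main obstacle to be the index $i=d$, namely the bound $a_d(G(Q^l))\leq -1$. In contrast to $i=d-2$, here $\h_{\mathcal{R}_{+}}^{d+1}(\mathcal{R}^*)=0$, so there is no higher cohomology to absorb the class and the problem reduces to proving $[\h_{\mathcal{R}_{+}}^{d}(\mathcal{R}(Q))]_0=0$, a statement about $Q$ itself rather than its large powers. To reach it I would invoke the top-degree reduction of Lemma \ref{lemma-3}: having $\depth G(Q^l)\geq d-2$, I can cut down by a superficial regular sequence of initial forms and transport the top graded piece of $\h_{\mathcal{M}}^d$ to that of $\h_{\mathcal{M}}^2$ of a two-dimensional quotient, where $a_2\leq -1$ is exactly the two-dimensional input in the spirit of \cite{goto-ozeki}. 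The accompanying technical burden is the careful tracking of the $\h_{\m}^{d-1}(R)$ and $\h_{\m}^d(R)$ contributions and of the degree shifts at the boundary indices $i=d-2,d-1,d$.
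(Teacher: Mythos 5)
Your part~(a) can be made to work, and at bottom it re-derives the two external inputs that the paper simply cites: your Veronese rescaling on $\mathcal{R}$ and $\mathcal{R}^*$ is the proof mechanism behind \cite[Lemma 2.4]{Hoa}, and your dimension-two input for $i=d$ is the monomial identity $(\mathfrak q^l)^2=(a^l,b^l)\mathfrak q^l$ together with Trung's bound $a_d(G(I))\le r_J(I)-d$ from \cite[Proposition 3.2]{trung-reduction}; the paper applies that bound directly in dimension $d$ via $(Q^l)^d=(x_1^l,\ldots,x_d^l)(Q^l)^{d-1}$, so it needs no prior reduction to dimension two and no prior knowledge that $\depth G(Q^l)\ge d-2$. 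Two points of care in (a). First, the asserted identification $[\h_{\mathcal{R}_+}^i(\mathcal{R}^*(Q))]_n\cong \h_{\m}^i(R)$ for $n\ll 0$ is false in general: what is true is only that the direct system $([\h_{\mathcal{R}_+}^i(\mathcal{R}^*(Q))]_n)_{n\to-\infty}$, with maps $t^{-1}$, has colimit $\h_{\m}^i(R)$; the graded pieces need not stabilize (already for $d=1$ and $R$ Cohen--Macaulay the pieces of $\h^1$ grow strictly forever). Your argument survives for $i\le d-2$ only because an injective system with zero colimit has all terms zero. Second, to kill $\h_{\mathcal{M}}^0(G(Q^l))$ you need $\h_{\mathcal{R}_+}^1(\mathcal{R}^*(Q^l))=0$, which lies outside your stated range ``$2\le i\le d-2$''; you should run the injection argument on $\mathcal{R}^*$ for all $1\le i\le d-2$ (it needs only $b_{i-1}(G(Q))\ge 0$ and $\h_{\m}^i(R)=0$), not on $\mathcal{R}$ for $i\ge 2$.

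The genuine gap is in part~(b). Part~(a) does \emph{not} confine $\h_{\mathcal{M}}^{d-2}(G(Q^{l}))$ to degree $-1$: it gives only the upper bound $a_{d-2}(G(Q^l))\le -1$ and says nothing about the degrees $n\le -2$. Killing those degrees is exactly where the finite-gradedness hypothesis must act, and the use you propose for it (annihilating $\h^{d-2}_{\mathcal{R}_+}$ of the Rees ring after Veronese truncation) does not accomplish this: for $n\le -2$ the long exact sequence gives $[\h_{\mathcal{M}}^{d-2}(G(Q^{l}))]_n\cong\ker\bigl(t^{-l}\colon [\h_{\mathcal{R}_+}^{d-1}(\mathcal{R}^*(Q))]_{(n+1)l}\to[\h_{\mathcal{R}_+}^{d-1}(\mathcal{R}^*(Q))]_{nl}\bigr)$ once the $\h^{d-2}(\mathcal{R}^*)$-flank vanishes (which it does), and these modules sit in very negative degrees, where their colimit is $\h_{\m}^{d-1}(R)$ --- \emph{not} zero under $\depth R\ge d-1$. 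So, unlike the range $i\le d-2$ used in (a), nothing vanishes for free here, and injectivity into a nonzero module proves nothing. The repair inside your framework is to use finite-gradedness where it actually bites: $[\h_{\mathcal{M}}^{d-2}(G(Q))]_m=0$ for all $m<b_{d-2}(G(Q))$, so the long exact sequence for $Q$ itself makes $t^{-1}$ injective on $\h_{\mathcal{R}_+}^{d-1}(\mathcal{R}^*(Q))$ out of every degree $\le b_{d-2}(G(Q))$; choosing $l_0\ge |b_{d-2}(G(Q))|$, each composite $t^{-l_0}$ out of degree $(n+1)l_0\le -l_0$ is then injective, so the kernels above vanish and $[\h_{\mathcal{M}}^{d-2}(G(Q^{l_0}))]_n=0$ for $n\le -2$. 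This is precisely the inequality $b_{d-2}(G(Q^{l_0}))\ge\lfloor b_{d-2}(G(Q))/l_0\rfloor\ge -1$ that the paper obtains by citing \cite[Lemma 2.4]{Hoa}. Your remaining steps in (b) --- translating $[\h_{\mathcal{M}}^{d-1}(G(Q^{l_0}))]_0=0$ into $[\h_{\mathcal{R}_+}^{d-1}(\mathcal{R}^*(Q))]_0=0$ and then killing the degree $-1$ piece as a kernel of $t^{-1}$ --- are correct.
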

\begin{proof}
We may assume that the residue field $R/\m$ is infinite.
 Let $Q=(x_1,\ldots,x_d)$ such that $x_1^*,\ldots,x_d^*$ is a superficial sequence. For $l\geq 1$, we put $I=Q^l$.
  For a rational number $a$, let $\lfloor a \rfloor=\max\{n\in\ints: n\leq a\}$.
Given that  $b_i(G(Q))\geq 0$ for $0\leq i\leq d-3$. Thus $b_i(G(I))\geq \lfloor b_i(G(Q))/l \rfloor \geq 0$ for $0\leq i\leq d-3$ by
\cite[Lemma 2.4]{Hoa}. Choose  $l>\max\{|a_i(G(Q))|:a_i(G(Q))\neq -\infty\}$
and $y_i=x_i^l$ for $1\leq i\leq d$. Then $y_1^*,\ldots,y_d^*$ is a superficial sequence with respect to
$I$ and 
\begin{equation}\label{eqn-reduction-no-of-I}
 I^{d}=(y_1,\ldots,y_d)I^{d-1}.
\end{equation}
 To see the above equality, note that $I^d=Q^{ld}$ is generated by monomials in $x_1,\ldots,x_d$ of degree $ld$. 
Let $m=x_1^{t_1}\ldots x_d^{t_d}$ with $t_1+\ldots+t_d=ld$ be a generator of $I^d$. Then at least one $t_i\geq l$, so 
$m=x_i^{l}.m'$ where $m'=x_1^{t_1}\ldots x_i^{t_i-l}\ldots x_d^{t_d}\in Q^{ld-l}$. Hence 
$m\in (x_1^l,\ldots,x_i^l,\ldots,x_d^l)Q^{l(d-1)}=(y_1,\ldots,y_d)I^{d-1}$ which gives 
$I^d \subseteq(y_1,\ldots,y_d)I^{d-1}$.
 Since $J=(y_1,\ldots,y_d)$ is a reduction of $I$, $\mu(J)= d$. Hence $J$ is a minimal reduction of $I$ with $r_J(Q^l)\leq d-1$. So by
 \cite[Proposition 3.2]{trung-reduction}, $a_d(G(I))\leq r_J(I)-d.$

$\ref{lemma-4-item-0}$ It follows that $a_d(G(I))<0$ and  $a_i(G(I))\leq \lfloor a_i(G(Q))/l\rfloor \leq 0$ for $i\leq d-1$ by choice of $l$ and \cite[Lemma 2.4]{Hoa}.
In other words,
 \begin{align}\label{equation:bi(G(I))}
  [\h_{\mathcal{M}}^i(G(I))]_n=0\begin{cases}
                                  \text{ for all } n\geq 1 \text{~~and } i\in \ints,\\
                                  \text{ for all } n\neq 0 \text{~~and } 0\leq i\leq d-3,\\
                                  \text{ for all } n\geq 0 \text{~~and } i=d.
                              \end{cases}
 \end{align}
 \textbf{Claim:} $[\h_{\mathcal{M}}^{i}(G(I))]_0=0 \text{ for } 0\leq i\leq d-2$.

 {\it {Proof of Claim.}}
   We apply induction on $i$. For $i=0,$
 $[\h_{\mathcal{M}}^0(G(I))]_0\cong[\h_{\mathcal{R}_+}^0(R(I))]_0=0$ by Lemma \ref{lemma-1} as $\depth R\geq 1$.
   Let $[\h_{\mathcal{M}}^{i}(G(I))]_0=0$ for $0\leq i\leq s$ for some $s\leq d-3$. Using \eqref{equation:bi(G(I))},
we get that $\h_{\mathcal{M}}^{i}(G(I))=0$ for $0\leq i\leq s$. So $\depth (G(I))\geq s+1$ and $y_1^*,\ldots,y_{s+1}^*$ is a regular sequence in $G(I)$. By
Lemma \ref{lemma-3},
\begin{align}
[\h_{\mathcal{M}}^{s+1}(G(I))]_0 &\cong [\h_{\mathcal{M}}^0(G(IR_{s+1}))]_{s+1} \text{ and }\label{eqn-chap-last-e2}\\
{[}\h_{\mathcal{M}}^i(G(IR_{s+1}))]_{n} &= 0 \text{ for all } n\geq s+2\text{ and }i\in\ints\nonumber
\end{align}
where $R_{s+1}=R/(y_1,\ldots,y_{s+1})$.
Thus using Lemma \ref{lemma-1} with $m_{IR_{s+1}}=s+1$, we get $[\h_{\mathcal{M}}^0(G(IR_{s+1}))]_{s+1}\cong [\h_{\mathcal{R}_+}^0(R(IR_{s+1}))]_{s+1}=0$
since $\depth R_{s+1}\geq 1$.
Thus $[\h_{\mathcal{M}}^{s+1}(G(I))]_0=0$ by \eqref{eqn-chap-last-e2}. This completes the proof of the claim. $\square$

By above claim, $a_{d-2}(G(Q^l))\leq -1$ and $\h_{\mathcal{M}}^i(G(I))=0$  for  $0\leq i\leq d-3$ which implies that $\depth G(Q^l)\geq d-2$ for all $l\gg 0$.

\ref{lemma-4-item-3}   For $d=2$ case, $a_{d-2}(G(Q^l))\leq -1$ implies $[\h_{\mathcal{M}}^{0}(G(I))]_n=0$ for 
all $n\geq 0$.
Thus $\h_{\mathcal{M}}^{0}(G(I))=0$ and $\depth G(I)\geq 1.$ Now let $d\geq 3.$
We can choose $l_{0}> \max \{|b_{d-2}(G(Q))|, |a_i(G(Q))|:a_i(G(Q))\neq -\infty\}$ such that
$[\h_{\mathcal{M}}^{d-1}(G(Q^{l_0}))]_0=0$. By part $(a)$,  $\h_{\mathcal{M}}^{i}(G(Q^{l_0}))=0$ for $0\leq i \leq d-3$. If 
we show that $\h_{\mathcal{M}}^{d-2}(G(Q^{l_0}))=0$, it will follow that $\depth G(Q^{l_0})\geq d-1.$ Let $I_{0}=Q^{l_0}$.
  Since $\h_{\mathcal{M}}^{d-2}(G(Q))$ is finitely graded, 
 $b_{d-2}(G(I_0))\geq \lfloor b_{d-2}(G(Q))/l_0 \rfloor \geq -1$ by \cite[Lemma 2.4]{Hoa}. Thus, by part \ref{lemma-4-item-0},
\begin{equation*}
[\h_{\mathcal{M}}^{d-2}(G(I_0))]_n=0 \mbox{ for } n\neq -1.
\end{equation*}
Now it is enough to show that $[\h_{\mathcal{M}}^{d-2}(G(I_0))]_{-1}=0$.
Given that $[\h_{\mathcal{M}}^{d-1}(G(I_0))]_0=0$. By part \ref{lemma-4-item-0} and \eqref{equation:bi(G(I))}, we get that $[\h_{\mathcal{M}}^{i}(G(I_0))]_n=0$ for all $n\geq 0$
and $i\in\ints$. Also $\depth G(I_0)\geq d-2$ implies that $y_1^*,\ldots,y_{d-2}^*$ is a regular sequence in $G(I_0).$ Therefore by Lemma \ref{lemma-3},
\begin{align*}
[\h_{\mathcal{M}}^{d-2}(G(I_0))]_{-1}&\cong[\h_{\mathcal{M}}^{0}(G(I_0R_{d-2}))]_{d-3} \mbox{ and}\\
{[}\h_{\mathcal{M}}^{i}(G(I_0R_{d-2}))]_{n+1}&=0 \mbox{ for } n\geq d-3 \mbox{ and } i\in\ints.
\end{align*}
 So by Lemma \ref{lemma-1}, $[\h_{\mathcal{M}}^{0}(G(I_0R_{d-2}))]_{d-3}\cong [\h_{\mathcal{M}}^{0}(R(I_0R_{d-2}))]_{d-3}=0$
as $\depth R_{d-2}\geq 1$.  Hence $[\h_{\mathcal{M}}^{d-2}(G(I_0))]_{-1}=0$.
 \end{proof}

\section{Bounding the Hilbert coefficients}\label{bounding-the-hilbert-coefficients}
In this section, we obtain bounds on the coefficients $e_i(Q)$ for a parameter ideal $Q$ in a ring of depth at least $d-1$ with certain conditions on the local
cohomology modules of $G(Q)$. We show that the last coefficient $e_d(Q)\leq 0$ if $b_i(G(Q))\geq 0$ for $0\leq i\leq d-3$ (Theorem \ref{theorem-main-(-1)}) and that
$e_d(Q)\geq-\lambda_R(\h_{\m}^{d-1}(R))$ if $b_i(G(Q))\geq 0$ for $0\leq i\leq d-2$ (Theorem \ref{theorem-main-2}). Note that the above conditions on $b_i(G(Q))$
holds if $\depth G(Q)$ is at least $d-2$ and $d-1$ respectively. Consequently for $2\leq i\leq d $, we obtain that (1) $e_i(Q)\leq 0$ provided $\depth G(Q)\geq d-2$
(Corollary \ref{corr-ei-lesseq-0}) and (2) $e_i(Q)\geq -\lambda_R(\h_{\m}^{d-1}(R))$ provided $\depth G(Q)\geq d-1$ (Corollary \ref{corr-ei-grteq}).

The most interesting result of this section is Theorem \ref{theorem-e3} which states that $e_3(Q)\leq 0$ for a parameter ideal $Q$ with $\depth R\geq d-1$.
In order to prove this, we first need the  non-positivity of $e_d(Q)$.
 \begin{theorem}\label{theorem-main-(-1)}
 Let $(R,\m)$ be a Noetherian local ring of dimension $d\geq 2$ and $\depth R\geq d-1$. Let $Q$ be a parameter ideal such that
 $b_i(G(Q))\geq 0$ for $0\leq i\leq d-3.$ Then
 $$e_d(Q)\leq 0.$$
\end{theorem}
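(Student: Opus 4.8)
The plan is to reduce to a large power $Q^l$ and to exploit the strong vanishing of the graded local cohomology of $G(Q^l)$ guaranteed by Lemma~\ref{lemma-4}. The first observation I would record is that the last Hilbert coefficient is invariant under taking powers. Indeed, $\lambda_R(R/(Q^l)^n)=\lambda_R(R/Q^{ln})$ for all $n$, so the Hilbert--Samuel polynomials satisfy $P(Q^l,x)=P(Q,lx)$; evaluating at $x=0$ and noting that in the binomial basis the constant term of $P(Q,x)$ equals $(-1)^d e_d(Q)$ (every other binomial vanishes at $x=0$) yields
\begin{equation*}
 e_d(Q^l)=e_d(Q)\quad\text{for every }l\geq 1.
\end{equation*}
Hence it suffices to bound $e_d(Q^l)$ for a single conveniently chosen $l$.

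Next I would express $e_d$ through the graded pieces of $\h_{\mathcal{M}}^i(G(Q))$. Since $Q$ is a parameter ideal, $R/Q$ is Artinian, so $G:=G(Q)$ is a standard graded algebra over an Artinian ring and each $[\h_{\mathcal{M}}^i(G)]_n$ has finite length; set $\chi(n)=\sum_{i=0}^{d}(-1)^i\lambda_R\big([\h_{\mathcal{M}}^i(G)]_n\big)$, which vanishes for $n\gg 0$. The Grothendieck--Serre formula gives $\lambda_R(Q^n/Q^{n+1})=P_G(n)+\chi(n)$ for all $n$, where $P_G$ denotes the Hilbert polynomial of $G$. Summing over $0\leq k\leq n-1$ and comparing with $P(Q,n)$ for $n\gg 0$, the polynomial part $\sum_{k=0}^{n-1}P_G(k)$ vanishes at $n=0$, so the only surviving constant is the total sum of $\chi$. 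This produces the formula
\begin{equation*}
 (-1)^d e_d(Q)=P(Q,0)=\sum_{n\geq 0}\sum_{i=0}^{d}(-1)^i\lambda_R\big([\h_{\mathcal{M}}^i(G(Q))]_n\big).
\end{equation*}

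I would then apply this formula to $I=Q^l$ with $l\gg 0$. By Lemma~\ref{lemma-4}(a) we have $\h_{\mathcal{M}}^i(G(Q^l))=0$ for $0\leq i\leq d-3$, together with the bounds $a_i(G(Q^l))\leq 0$ for all $i$ and $a_d(G(Q^l)),a_{d-2}(G(Q^l))\leq -1$. Consequently, in the double sum above restricted to $n\geq 0$ every term vanishes except the single piece coming from $i=d-1$ and $n=0$: the levels $i\leq d-3$ are zero, the levels $i=d-2$ and $i=d$ vanish in all non-negative degrees, and the level $i=d-1$ survives only at $n=0$. The sum therefore collapses and gives
\begin{equation*}
 e_d(Q)=e_d(Q^l)=-\lambda_R\big([\h_{\mathcal{M}}^{d-1}(G(Q^l))]_0\big)\leq 0,
\end{equation*}
which is the desired conclusion.

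The crux, and the step I expect to be the main obstacle, is that the hypothesis $b_i(G(Q))\geq 0$ controls only the \emph{negative} graded components of local cohomology, whereas the formula for $e_d$ detects exactly the \emph{non-negative} components; for $Q$ itself these two ranges are independent and the hypothesis gives no information about the sign of the sum. The passage to high powers in Lemma~\ref{lemma-4} is precisely what reconciles them: the negative-degree vanishing for $G(Q)$ is preserved under the comparison $b_i(G(Q^l))\geq \lfloor b_i(G(Q))/l\rfloor$, and combined with the reduction-number estimate $r_J(Q^l)\leq d-1$ it forces the top-degree bounds $a_i(G(Q^l))\leq 0$ that make the formula computable. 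The only remaining delicate point is the constant-term bookkeeping in the Grothendieck--Serre summation, which must be carried out so that $P(Q,0)$ is identified with the full sum $\sum_{n\geq 0}\chi(n)$ rather than with only its tail.
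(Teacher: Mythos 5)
Your proof is correct, and its skeleton is the same as the paper's: pass to $I=Q^l$ for $l\gg 0$, invoke Lemma \ref{lemma-4}\ref{lemma-4-item-0} to get $\h_{\mathcal{M}}^i(G(I))=0$ for $i\leq d-3$, $a_i(G(I))\leq 0$ for all $i$ and $a_{d-2}(G(I)), a_d(G(I))\leq -1$, and then collapse a cohomological expression for the constant term $P(I,0)=(-1)^de_d(I)$ to the single surviving piece, arriving at exactly the paper's identity \eqref{equation:ed-negative}, namely $e_d(Q)=e_d(I)=-\lambda_R([\h_{\mathcal{M}}^{d-1}(G(I))]_0)\leq 0$. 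Where you genuinely diverge is in how that cohomological expression is produced. The paper quotes \cite[Theorems 3.8 and 4.1]{blancafort} to write $(-1)^de_d(I)=P(I,0)-H(I,0)$ as the alternating sum of lengths of the degree-zero components of $\h_{\mathcal{R}_+}^i(R(I)^*)$, hence of $\h_{\mathcal{R}_+}^i(R(I))$, and then needs Lemma \ref{lemma-1} (applicable in degree $0$ precisely because Lemma \ref{lemma-4} forces $m_I\leq 0$) to convert these into $[\h_{\mathcal{M}}^i(G(I))]_0$. You bypass the Rees and extended Rees algebras entirely: you apply the Grothendieck--Serre formula to $G(I)$ itself, sum over degrees, and use the vanishing at $n=-1$ of the summed Hilbert polynomial to obtain $(-1)^de_d(I)=\sum_{n\geq 0}\sum_{i=0}^{d}(-1)^i\lambda_R([\h_{\mathcal{M}}^i(G(I))]_n)$, after which the same vanishing from Lemma \ref{lemma-4}\ref{lemma-4-item-0} kills every term except $(i,n)=(d-1,0)$. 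Your route is more self-contained --- no comparison between $R(I)$ and $G(I)$, and you make explicit the power-invariance $e_d(Q^l)=e_d(Q)$, which the paper uses tacitly --- at the cost of the constant-term bookkeeping in the summation, which you handle correctly; note, however, that the Serre formula over an Artinian local base (rather than a field) that you invoke is itself part of Blancafort's toolkit, so the two arguments draw on the same circle of results. Both rest on the identical crucial input, Lemma \ref{lemma-4}\ref{lemma-4-item-0}; the difference is confined to this middle layer.
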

\begin{proof}
We may assume that the residue field is infinite. Let $Q=(x_1,\ldots,x_d)$ such that $x_1^*,\ldots,x_d^*$ is a superficial sequence.
For an integer $l\gg 0$, we put $I=Q^l$. By Lemma \ref{lemma-4}$\ref{lemma-4-item-0}$,  \begin{equation}\label{equation:vanishing-of-n-pieces}
                                                                                        [\h_{\mathcal{M}}^{i}(G(I))]_0=0 \text{ for } 0\leq i\leq d-2 \mbox{ and } i=d.
                                                                                       \end{equation}
and by \cite[Theorem 3.8]{blancafort}, $[\h_{\mathcal{R}_+}^i(R(I)^*)]_0\cong [\h_{\mathcal{R}_+}^i(R(I))]_0$ for all $i\geq 0$. Thus
\cite[Theorem 4.1]{blancafort} yields that
 \begin{align}
(-1)^d e_d(I)&=P(I,0)- H(I,0)\nonumber\\
&=\sum_{i=0}^d(-1)^i\lambda_R([\h_{\mathcal{R}_+}^i(R(I)^*)]_0)\nonumber\\
&=\sum_{i=0}^d(-1)^i\lambda_R([\h_{\mathcal{R}_+}^i(R(I))]_0).\label{formula-for-ed}
\end{align}
By Lemma \ref{lemma-4}$\ref{lemma-4-item-0}$, $[\h_{\mathcal{M}}^i(G(I))]_n=0$ for all $n\geq 1$ and $i\in \ints$.
 Therefore by Lemma \ref{lemma-1} and \eqref{formula-for-ed}, we get
 \begin{align*}
 (-1)^de_d(I)=\sum_{i=0}^d(-1)^i\lambda_R([\h_{\mathcal{M}}^i(G(I))]_0)=(-1)^{d-1}\lambda_R([\h_{\mathcal{M}}^{d-1}(G(I))]_0)
\end{align*}
where the last equality holds due to \eqref{equation:vanishing-of-n-pieces}.
This implies
\begin{equation}\label{equation:ed-negative}
 e_d(Q)=e_d(I)=-\lambda_R([\h_{\mathcal{M}}^{d-1}(G(I))]_0)\leq 0.
\end{equation}
\end{proof}
In \cite[Corollary 4.5]{Lori}, Mccune proved that the coefficients
$e_i(Q)$, for $2\leq i\leq d$, are all non-positive when $\depth G(Q)\geq d-1$. In the next corollary, we improve upon her result by weakening the hypothesis
to $\depth G(Q)\geq d-2$.
\begin{corollary}\label{corr-ei-lesseq-0}
  Let $(R,\m)$ be a Noetherian local ring of dimension $d\geq 2$ and $\depth R\geq d-1$. Let $Q$ be a parameter ideal of $R$
  such that $\depth(G(Q))\geq d-2$. Then, for $2\leq i\leq d$,
 \begin{equation*}
 e_i(Q)\leq 0.
  \end{equation*}
\end{corollary}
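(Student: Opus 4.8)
The plan is to reduce the statement about all coefficients $e_i(Q)$ for $2\leq i\leq d$ to the single non-positivity result $e_d(Q)\leq 0$ already established in Theorem \ref{theorem-main-(-1)}, by an induction on dimension that peels off superficial elements. The key observation is that the hypothesis $\depth G(Q)\geq d-2$ is exactly what guarantees $b_i(G(Q))\geq 0$ for $0\leq i\leq d-3$ (indeed, $\h^i_{\mathcal{M}}(G(Q))=0$ for $0\leq i\leq d-3$ outright), so Theorem \ref{theorem-main-(-1)} applies and gives us the top coefficient for free. The task is then to propagate this down to the lower coefficients $e_i$ with $2\leq i\leq d-1$.

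First I would pass to the case of infinite residue field and choose $Q=(x_1,\ldots,x_d)$ with $x_1^*,\ldots,x_d^*$ a superficial sequence in $G(Q)$. Since $\depth G(Q)\geq d-2\geq 1$ (using $d\geq 2$, and treating the base case $d=2$ separately, where the claim is just $e_2(Q)\leq 0$), the element $x_1^*$ is a nonzerodivisor on $G(Q)$. The standard behavior of Hilbert coefficients under reduction by a superficial element that is regular on $G(Q)$ is that passing to $\bar R=R/(x_1)$ preserves the relevant coefficients: one has $e_i(Q\bar R)=e_i(Q)$ for the appropriate range of $i$, and $\dim \bar R=d-1$, $\depth \bar R\geq d-2=\dim\bar R-1$, while $\depth G(Q\bar R)\geq \depth G(Q)-1\geq d-3=\dim\bar R-2$. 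This lets the induction hypothesis on $\bar R$ dispose of $e_i(Q)\leq 0$ for $2\leq i\leq d-1$, and the remaining top coefficient $e_d(Q)\leq 0$ is handled directly by Theorem \ref{theorem-main-(-1)} in $R$ itself.

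Concretely, the induction is on $d$. For the base case $d=2$, the only coefficient in range is $e_2(Q)$, which is the top coefficient $e_d(Q)$, so Theorem \ref{theorem-main-(-1)} applies directly once we check $b_i(G(Q))\geq 0$ vacuously (the range $0\leq i\leq d-3$ is empty). For the inductive step with $d\geq 3$, I would cut by the superficial nonzerodivisor $x_1^*$ to get $\bar R$ with $\dim\bar R=d-1$ and $\depth G(Q\bar R)\geq d-3=(d-1)-2$, apply the induction hypothesis to obtain $e_i(Q\bar R)=e_i(Q)\leq 0$ for $2\leq i\leq d-1$, and invoke Theorem \ref{theorem-main-(-1)} for $e_d(Q)\leq 0$.

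The main obstacle I anticipate is justifying the equality $e_i(Q\bar R)=e_i(Q)$ cleanly. This requires that $x_1^*$ be a nonzerodivisor on $G(Q)$ (so that reduction mod $x_1$ does not disturb the graded structure) together with a control of how local cohomology passes to the quotient; the exact sequence in \eqref{long-exact-seq-1} of Lemma \ref{lemma-3}, specialized to the single regular element $x_1^*$, is precisely the tool that tracks $\h^i_{\mathcal{M}}(G(Q\bar R))$ in terms of $\h^i_{\mathcal{M}}(G(Q))$ and thereby guarantees both the depth drop $\depth G(Q\bar R)\geq \depth G(Q)-1$ and the persistence of the coefficients. Verifying that the superficiality and regularity hypotheses survive the reduction, and that the depth bound $\depth\bar R\geq \dim\bar R-1$ needed to reapply the induction hypothesis is genuinely met, is the delicate bookkeeping that the argument hinges on.
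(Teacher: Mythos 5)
Your proposal is correct and follows essentially the same route as the paper: the paper also reduces modulo the superficial elements $x_1,\ldots,x_{d-i}$ (whose initial forms are $G(Q)$-regular since $\depth G(Q)\geq d-2$), identifies $e_i(Q)=e_i(QR_{d-i})$ as the top coefficient of the quotient ring, and applies Theorem \ref{theorem-main-(-1)} there. Your one-element-at-a-time induction on $d$ is just an unrolled version of the paper's single multi-step reduction, with the same key ingredients (superficial reduction preserving coefficients, depth bookkeeping, and the non-positivity of the top coefficient).
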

\begin{proof}
We may assume that the residue field $R/\m$ is infinite. Let $Q=(x_1,\ldots,x_d)$ such that  $x_1^*,\ldots,x_d^*$ is a superficial sequence.
Set $R_0=R$ and $R_i=R/(x_1,\ldots,x_i)$ for $1\leq i\leq d-2$.
Since $x_1^*,\ldots,x_{d-2}^*$ is a regular sequence in $G(Q)$, $G(QR_i)\cong G(Q)/(x_1^*,\ldots,x_i^*)G(Q)$ and $\depth G(QR_i)\geq d-i-2$ for $i\leq d-2$.
Hence $e_i(Q)=e_i(QR_{d-i})\leq 0$ for $2\leq i\leq d$ by Theorem \ref{theorem-main-(-1)}.
\end{proof}
 The following example emphasizes that the depth condition on the ring is necessary in the above corollary. This example is motivated by the idea presented in \cite{goto-ozeki}.
  \begin{example}\label{example-gcm-22-1}
   Let $(R,\mathfrak n)$ be a regular local ring of dimension $d\geq 2$ and $X_1,\ldots,X_d$ a regular system of parameters of $R$.
   We put $\mathfrak p_t=(X_1,\ldots,X_{d-t})$ for some $1\leq  t\leq d-1$ and $D=R/\mathfrak{p_t}.$ Let $A=R\ltimes D$ be the idealization
   of $D$ over $R.$ Then $A$ is a Noetherian local ring dimension $d$  with the maximal ideal $\m=\mathfrak{n}\times D$ and $\depth A=t$.
   Consider the exact sequence of $A$-modules
   \begin{equation}\label{exact-seq-22-1}
    0\to D\xrightarrow{j} A \xrightarrow{p} R\to 0
   \end{equation}
where $j(x)=(0,x)$ and $p(a,x)=a$. Note that $D$ is an $A$-module via $p$.
Let $Q$ be a parameter ideal in $A$ and $q=p(Q)\subseteq R.$ Then
we have,
\begin{align}
 \lambda_A(A/Q^{n+1})&=\lambda_R(R/q^{n+1})+\lambda_R(D/q^{n+1}D)\nonumber\\
 &=e_0(q,R)\binom{n+d}{d}+\sum_{i=0}^t(-1)^ie_i(q,D)\binom{n+t-i}{t-i}\nonumber
\end{align}
for all $n\gg 0.$ This implies
 \begin{align}\label{eqn-22-1}
e_i(Q,A)=\begin{cases}
        e_0(q,R) & \text{ if }\ i=0\\
          0 & \text{ if }\ 1\leq i\leq d-t-1\\
          (-1)^{d-t}e_{i-d+t}(q,D) & \text{ if}\ d-t\leq i\leq d.
         \end{cases}
 \end{align}
In particular, let $d=4$ and $t=2$ so that
$D=R/(X_1,X_2)$. Let $q=(X_1,\ldots,X_d)$ and $Q=qA$. Then $\dim A=4$, $\depth A=2.$
Since $G(Q)=G(q)\ltimes G(qD)(-1)$, see \cite[Remark 2]{tony}, we have $\depth G(Q)=2$ but $e_2(Q,A)>0$ by \eqref{eqn-22-1}.
\end{example}
A noteworthy consequence of Theorem \ref{theorem-main-(-1)} is the following result.
\begin{theorem}\label{theorem-e3}
   Let $(R,\m)$ be a Noetherian local ring of dimension $d\geq 3$ and $\depth R\geq d-1$. Let $Q$ be a parameter ideal. Then $e_3(Q)\leq 0.$
\end{theorem}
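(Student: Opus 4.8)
The plan is to argue by induction on $d$, reducing to dimension three, where $e_3(Q)$ is the last coefficient $e_d(Q)$ and Theorem~\ref{theorem-main-(-1)} applies with no hypothesis on $G(Q)$. For $d=3$ the condition in Theorem~\ref{theorem-main-(-1)} requires only $b_i(G(Q))\geq 0$ for $0\leq i\leq d-3=0$, that is $b_0(G(Q))\geq 0$, and this holds automatically (as noted just before Lemma~\ref{lemma-4}). Hence $e_3(Q)=e_d(Q)\leq 0$ in dimension three, which settles the base case.

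For the inductive step I assume $d\geq 4$ and that the statement holds in dimension $d-1$. Passing to $R[X]_{\m R[X]}$ I may take $R/\m$ infinite, which changes none of the dimension, depth, or Hilbert coefficients. Since $\depth R>0$ we have $\m\notin\ass R$, and as $Q$ is $\m$-primary no associated prime of $R$ contains $Q$; therefore a general element $x\in Q$ avoids $\bigcup\ass R$ and is a nonzerodivisor, while genericity over the infinite residue field simultaneously makes $x$ a superficial element for $Q$ and a minimal generator of $Q$. Writing $\bar R=R/(x)$ and $\bar Q=QR/(x)$, I obtain $\dim\bar R=d-1\geq 3$ and $\depth\bar R=\depth R-1\geq(\dim\bar R)-1$; and, $x$ being a minimal generator of the parameter ideal $Q$, the ideal $\bar Q$ is again a parameter ideal of $\bar R$, so $\bar R$ meets the hypotheses of the theorem.

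The numerical heart of the reduction is that cutting by a superficial nonzerodivisor preserves the lower coefficients. For $n\gg 0$, regularity and superficiality of $x$ give $Q^{n+1}:x=Q^n$, producing short exact sequences $0\to R/Q^n\xrightarrow{x}R/Q^{n+1}\to\bar R/\bar Q^{n+1}\to 0$; comparing lengths yields $P(\bar Q,x)=P(Q,x)-P(Q,x-1)$, whence $e_i(Q)=e_i(\bar Q)$ for $0\leq i\leq d-1$. Because $d\geq 4$ we have $3\leq d-1$, so $e_3(Q)=e_3(\bar Q)$, and the induction hypothesis applied to $\bar R$ gives $e_3(\bar Q)\leq 0$, completing the induction.

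The genuinely delicate point is the bookkeeping of the reduction rather than any new cohomological estimate: one must check that a single general $x$ can be taken at once superficial, regular, and a minimal generator, so that both the depth inequality $\depth\bar R\geq(\dim\bar R)-1$ and the parameter-ideal property descend to $\bar R$, and that the coefficient identity is valid through $i=d-1$. Once these standard facts are in place, the whole theorem rests on the automatic bound $b_0(G(Q))\geq 0$, which is precisely what frees Theorem~\ref{theorem-main-(-1)} of its hypotheses in dimension three.
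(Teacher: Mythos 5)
Your proof is correct and follows essentially the same route as the paper: the paper's own proof also reduces modulo superficial elements to the case $d=3$ and then invokes Theorem~\ref{theorem-main-(-1)}, whose hypothesis is vacuous there because $b_0(G(Q))\geq 0$ always holds. Your contribution is merely to organize that reduction as an induction and spell out the standard facts (superficial nonzerodivisor, descent of depth and of the parameter-ideal property, and $e_i(Q)=e_i(\bar Q)$ for $i\leq d-1$) that the paper leaves implicit.
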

\begin{proof}
 We may assume that $R/\m$ is infinite. Then using reduction modulo superficial elements, it is enough to assume that $d=3$.
 The result now follows from Theorem \ref{theorem-main-(-1)}.
\end{proof}
The following example shows that the assumption on the depth of the ring can not be relaxed from Theorem \ref{theorem-e3}.
\begin{example}\label{example-22-1-chap-6}{\rm\cite[Example 4.7]{goto-ozeki-2}}
 Let $(S,\mathfrak{n})$ be a regular local ring of dimension $d=4$ with infinite residue field $S/\mathfrak{n}$. Let $X,Y,Z,W$ be a regular system
 of parameters of $S$ and $R=S/((X)\cap (Y^3,Z,W))$. Let $x,y,z,w$ be the images of $X,Y,Z,W$ in $R$ respectively and
 $\m=(x,y,z,w)R$ be the maximal ideal of $R$. Then $\dim R=3,$ $\depth R=1$. Let $U=(x)$, $Q=(x-y,x-z,x-w)R$ and $T=R/(x)$.
 Then $T$ is a regular local ring with $\dim T=3$ and $QT=\m T$. The following exact sequence
 $$0\to (x)\to R\to R/(x)\to 0$$ gives that
 \begin{align}
  \lambda_R(R/Q^{n+1}R)&=\lambda_R(T/\m^{n+1}T)+\lambda_R(U/Q^{n+1}U)\nonumber\\
  &=\binom{n+3}{3}+e_0(Q,U)\binom{n+1}{1}-e_1(Q,U)\label{eqn-23-1-example}
 \end{align}
for all $n\gg 0.$ We have
 $(x)\cong R/I$ where $I=(y^3,z,w)R$ and $Q(R/I)=\m (R/I)$, so $e_0(Q,U)=e_0(\m,R/I)$ and $e_1(Q,U)=e_1(\m,R/I)$. The
 Hilbert series of the associated graded ring $G(\m (R/I))$ is
$$\frac{1+t+t^2}{1-t}$$
Hence  $e_0(Q,U)=e_0(\m,R/I)=3$ and $e_1(Q,U)=e_1(\m,R/I)=3.$ By \eqref{eqn-23-1-example}, we get
 $e_3(Q,R)=3>0.$
\end{example}
The following lemma is crucial for obtaining lower bound on $e_i(Q)$. We also obtain a necessary condition for the vanishing of $e_d(Q)$ in
Theorem \ref{theorem-vanishing-of-ed} as an application of this lemma.
\begin{lemma}\label{lemma-main-1} Let $(R,\m)$ be a Noetherian local ring of dimension $d\geq 2$ and $\depth R\geq d-1$. Let $I$ be an $\m$-primary ideal such
that
$\depth G(I)\geq d-1$ and
 \begin{align}\label{eqn-lemma-main-1}
  a_i(G(I))\leq \begin{cases}
                 0 & \text{ for } i\in\ints, \\
		  -1 & \text{ for } i=d.
                \end{cases}
 \end{align}
Let $J=(y_1,\ldots,y_d)$ be a reduction of $I$ with $I^{d}=JI^{d-1}$ and $y_1^*,\ldots,y_{d-1}^*$
 is a superficial sequence. Then
 \begin{equation}\label{eqn-main-formula-for-ed}
  e_d(I)=-\lambda_R\left(\frac{((y_1,\ldots,y_{d-1}):y_d)\cap (I^{d-1}+(y_1,\ldots,y_{d-1}))}{(y_1,\ldots,y_{d-1})}\right).
 \end{equation}
\end{lemma}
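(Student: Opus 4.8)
The plan is to reduce the computation of $e_d(I)$ to a single graded piece of a local cohomology module and then to strip off the sequence $y_1^*,\dots,y_{d-1}^*$ so as to land in the one-dimensional ring $\bar R:=R/(y_1,\dots,y_{d-1})$. First I would note that the hypotheses force $m_I=0$ in \eqref{eqn-6-09}: since $a_i(G(I))\le 0$ for all $i$, we have $[\h_{\mathcal M}^i(G(I))]_n=0$ for every $n\ge 1$. Then, exactly as in the proof of Theorem \ref{theorem-main-(-1)}, combining \cite[Theorems 3.8 and 4.1]{blancafort} with Lemma \ref{lemma-1} (applicable because $0>\max\{m_I-1,-1\}=-1$) gives
\[
(-1)^d e_d(I)=\sum_{i=0}^d(-1)^i\lambda_R([\h_{\mathcal M}^i(G(I))]_0).
\]
Here $\depth G(I)\ge d-1$ kills the terms with $0\le i\le d-2$, while $a_d(G(I))\le -1$ kills the term $i=d$, so only $i=d-1$ survives and $e_d(I)=-\lambda_R([\h_{\mathcal M}^{d-1}(G(I))]_0)$.

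Next I would pass to $\bar R$. Since $\depth G(I)\ge d-1$ and $y_1^*,\dots,y_{d-1}^*$ is a superficial sequence, it is a regular sequence in $G(I)$, so Lemma \ref{lemma-3} (with $m_I=0$ and $j=d-1$) yields $[\h_{\mathcal M}^{d-1}(G(I))]_0\cong[\h_{\mathcal M}^0(G(I\bar R))]_{d-1}$. Writing $\bar I=I\bar R$ and letting $z$ denote the image of $y_d$, it then remains to identify the degree $d-1$ piece of $\h^0_{G_+}(G(\bar I))$ with $(0:_{\bar R}z)\cap \bar I^{d-1}$, because modulo $(y_1,\dots,y_{d-1})$ the module in \eqref{eqn-main-formula-for-ed} is exactly $(0:_{\bar R}z)\cap \bar I^{d-1}$. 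Observe that $\dim\bar R=1$, that $(z)$ is a reduction of $\bar I$, and that $\bar I^d=z\bar I^{d-1}$, so $r_{(z)}(\bar I)\le d-1$.

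For the key computation I would first use that $\bar I^{m}=z\bar I^{m-1}$ for all $m\ge d$ forces $G_+^{\,d}\subseteq (z^*)$ inside $G(\bar I)$, whence $\h^0_{G_+}(G(\bar I))=\h^0_{(z^*)}(G(\bar I))$ is the $z^*$-torsion. Unwinding degrees, a class $\xi+\bar I^{n+1}\in G(\bar I)_n$ is $z^*$-torsion iff $z^k\xi\in \bar I^{n+k+1}$ for some $k$; for $n\ge d-1$ one has $\bar I^{n+k+1}=z^{k+1}\bar I^n$, and a short manipulation with the modular law gives
\[
[\h^0_{G_+}(G(\bar I))]_n\;\cong\;\frac{W\cap \bar I^{n}}{W\cap \bar I^{n+1}},\qquad W:=\h^0_{\m}(\bar R),
\]
for all $n\ge d-1$. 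The decisive extra input is that $W\cap \bar I^{d}=0$. Tracking $a$-invariants through the regular sequence (the long exact sequence from Lemma \ref{lemma-3} shows that $a_i(G(IR_j))\le j$ once $a_i(G(I))\le 0$) gives $a_0(G(\bar I))\le d-1$, hence $[\h^0_{G_+}(G(\bar I))]_n=0$ for $n\ge d$; by the displayed isomorphism this forces $W\cap \bar I^{d}=W\cap \bar I^{d+1}=\cdots=0$ via Krull's intersection theorem. Feeding $W\cap \bar I^d=0$ back in, the quotient at $n=d-1$ collapses to $W\cap \bar I^{d-1}$, and since $z(W\cap \bar I^{d-1})\subseteq W\cap \bar I^{d}=0$ together with $(0:_{\bar R}z)\subseteq W$ (valid because $z$ is a parameter of the one-dimensional ring $\bar R$), we obtain $W\cap \bar I^{d-1}=(0:_{\bar R}z)\cap \bar I^{d-1}$. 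Stringing the three displays together yields \eqref{eqn-main-formula-for-ed}.

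The main obstacle is this last step, the vanishing $\h^0_{\m}(\bar R)\cap \bar I^{d}=0$: it is precisely where the hypothesis $a_i(G(I))\le 0$ (beyond mere high depth) is indispensable, since without the bound $a_0(G(\bar I))\le d-1$ the natural value of $[\h^0_{G_+}(G(\bar I))]_{d-1}$ is the quotient $(W\cap \bar I^{d-1})/(W\cap \bar I^{d})$ rather than $(0:_{\bar R}z)\cap \bar I^{d-1}$ itself. I would also take care to record that $G(\bar I)\cong G(I)/(y_1^*,\dots,y_{d-1}^*)$ and that each $y_j^*$ remains regular on the successive quotients, which is the Valabrega--Valla content already invoked in Lemma \ref{lemma-3}.
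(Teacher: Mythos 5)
Your proposal is correct, and its first half is exactly the paper's: Blancafort's theorems plus Lemma \ref{lemma-1} (with $m_I=0$) give $(-1)^de_d(I)=\sum_{i}(-1)^i\lambda_R([\h_{\mathcal M}^i(G(I))]_0)$, the hypotheses kill every term except $i=d-1$, and Lemma \ref{lemma-3} converts the survivor into $[\h_{\mathcal M}^0(G(I\bar R))]_{d-1}$ with $\bar R=R/(y_1,\ldots,y_{d-1})$. Where you genuinely diverge is the identification of this graded piece with the colon module of \eqref{eqn-main-formula-for-ed}. The paper does this by writing down the explicit map $\rho(\bar x)=\overline{\bar xt^{d-1}}$ and checking surjectivity (using $[\h_{\mathcal M}^0(G(I\bar R))]_d=0$ and $I^d=JI^{d-1}$) and injectivity (using the vanishing in all degrees $\geq d$ together with Krull intersection). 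You instead observe that $\h_{\mathcal M}^0(G(\bar I))$ is the $z^*$-torsion (valid since $G_+^d\subseteq(z^*)$), compute \emph{every} graded piece in degrees $n\geq d-1$ as $(W\cap\bar I^{n})/(W\cap\bar I^{n+1})$ with $W=\h_{\m}^0(\bar R)=\bigcup_k(0:_{\bar R}z^k)$, and then feed in the same two inputs --- vanishing in degrees $\geq d$ (the second assertion of Lemma \ref{lemma-3}) and Krull intersection --- to get $W\cap\bar I^{d}=0$, hence $[\h_{\mathcal M}^0(G(\bar I))]_{d-1}\cong W\cap\bar I^{d-1}=(0:_{\bar R}z)\cap\bar I^{d-1}$, which is precisely the module of \eqref{eqn-main-formula-for-ed} read modulo $(y_1,\ldots,y_{d-1})$ (both ideals being intersected contain $(y_1,\ldots,y_{d-1})$, so intersection commutes with passing to $\bar R$). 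I checked the steps you left compressed --- the modular-law manipulation, the inclusion $(0:_{\bar R}z)\subseteq W$ because $(z)$ is $\m\bar R$-primary, and the bound $a_0(G(\bar I))\leq d-1$ --- and they all hold. So the two proofs run on the same ingredients ($\bar I^{n+1}=z\bar I^{n}$ for $n\geq d-1$, high-degree vanishing, Krull), and yours is a structural repackaging of the paper's element chase; still, it buys something concrete: it exhibits the module in \eqref{eqn-main-formula-for-ed} as a submodule of $\h_{\m}^0(\bar R)$ from the outset, so the estimate $e_d(Q)\geq-\lambda_R(\h_{\m}^0(R/(y_1,\ldots,y_{d-1})))$ invoked in the proof of Theorem \ref{theorem-main-2} becomes automatic, and it describes all of $\h_{\mathcal M}^0(G(\bar I))$ in degrees $\geq d-1$ rather than only the single piece needed. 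The paper's route, by contrast, stays entirely with representatives in $R$ and never needs the torsion description of $\h^0$ or the identification of $W$.
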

\begin{proof}
Since $\depth G(I)\geq d-1$, $y_1^*,\ldots,y_{d-1}^*$ is a regular sequence in $G(I)$ and $\h_{\mathcal{M}}^i(G(I))=0$ for $0\leq i\leq d-2$. By Lemma \ref{lemma-1},
$[\h_{\mathcal{M}}^i(G(I))]_0\cong [\h_{\mathcal{R}_+}^i(R(I))]_0$ for all $i$. Therefore using \eqref{formula-for-ed},
\begin{align*}
 (-1)^de_d(I)&=\sum_{i=0}^d(-1)^i\lambda_R([\h_{\mathcal{R}_+}^i(R(I))]_0)\\
 &=\sum_{i=0}^d(-1)^i\lambda_R([\h_{\mathcal{M}}^i(G(I))]_0)
\end{align*}

Using \eqref{eqn-lemma-main-1} and Lemma \ref{lemma-3} respectively, we get that
\begin{equation}
e_d(I)=-\lambda_R([\h_{\mathcal{M}}^{d-1}(G(I))]_0)=-\lambda_R([\h_{\mathcal{M}}^0(G(IR_{d-1}))]_{d-1}) \label{eqn-18/10}
\end{equation}
where $R_{d-1}=R/(y_1,\ldots,y_{d-1})$.
Now consider the map $$\rho : \frac{((y_1,\ldots,y_{d-1}):y_d)\cap (I^{d-1}+(y_1,\ldots,y_{d-1}))}{(y_1,\ldots,y_{d-1})} \to [\h_{\mathcal{M}}^0(G(IR_{d-1}))]_{d-1}$$
defined as $\rho(\bar{x})=\overline{\bar{x}t^{d-1}}$ where $\bar{x}$ and $\overline{\bar{x}t^{d-1}}$ are the images of $x\in R$ in $R/(y_1,\ldots,y_{d-1})$ and
$\bar{x}t^{d-1}\in R(IR_{d-1})$ in $G(IR_{d-1})$ respectively. It is now enough to show that $\rho$ is an isomorphism. To show surjectivity, let $\alpha=\overline{\bar{x}t^{d-1}}\in [\h_{\mathcal{M}}^0(G(IR_{d-1}))]_{d-1}$  with
$x\in I^{d-1}$. Then
\begin{equation}\label{eqn-pho-surjective-1}
 \bar{y}_dt~\cdot~\overline{\bar{x}t^{d-1}}=\overline{\overline{y_dx}t^d}\in [\h_{\mathcal{M}}^0(G(IR_{d-1}))]_{d}.
\end{equation}
Since $[\h_{\mathcal{M}}^0(G(IR_{d-1}))]_{d}=0$  by Lemma \ref{lemma-3} and
$I^d=JI^{d-1}$,
\eqref{eqn-pho-surjective-1} yields that
\begin{equation*}
y_dx\in(I^{d+1}+(y_1,\ldots,y_{d-1}))\cap I^d\subseteq (y_1,\ldots,y_{d-1})+I^{d+1}=(y_1,\ldots,y_{d-1})+y_dI^d.
\end{equation*}
 Let $y_dx= \sum_{i=1}^{d-1}r_iy_i+sy_d$ where $s\in I^{d}$.
This implies $y_d(x-s)\in (y_1,\ldots, y_{d-1})$. So $x-s\in ((y_1,\ldots,y_{d-1}):y_d)\cap I^{d-1}$ and $\rho(\overline{x-s})=\alpha$. Hence $\rho$ is surjective.

Now let $x\in ((y_1,\ldots,y_{d-1}):y_d)\cap I^{d-1}$ such that $\rho(\bar{x})=\overline{\bar{x}t^{d-1}}=0$ in $[G(IR_{d-1})]_{d-1}$.
Then \begin{equation*}
x\in ((y_1,\ldots,y_{d-1}):y_d)\cap (I^d+(y_1,\ldots,y_{d-1}))=(y_1,\ldots,y_{d-1})+\left(((y_1,\ldots,y_{d-1}):y_d)\cap I^d\right).
     \end{equation*}
\textbf{Claim:} Let $n\geq d$ be an integer. Then \begin{equation*}
((y_1,\ldots,y_{d-1}):y_d)\cap I^n\subseteq (y_1,\ldots,y_{d-1})+((y_1,\ldots,y_{d-1}):y_d)\cap I^{n+1}).
                                    \end{equation*}
{\it{Proof of Claim.}} Let $x\in ((y_1,\ldots,y_{d-1}):y_d)\cap I^n$, then $y_dx\in (y_1,\ldots,y_{d-1})$. So
$$\bar{y}_dt~\cdot~\overline{\bar{x}t^n}=\overline{\overline{y_dx}t^{n+1}}=0 \text{~in~} [G(IR_{d-1})]_{n+1}$$
which implies
$\overline{\bar{x}t^n}\in [G(IR_{d-1})]_n$ is annihilated by some power of $\mathcal{M}$. Thus
$\overline{\bar{x}t^n}\in [\h_{\mathcal{M}}^0(G(IR_{d-1}))]_n=0$. Recall that $[\h_{\mathcal{M}}^0(G(IR_{d-1}))]_n=0$ for all $n\geq d$ by Lemma \ref{lemma-3}. This gives
that $x\in (y_1,\ldots,y_{d-1})+I^{n+1}$. So $x\in (y_1,\ldots,y_{d-1})+((y_1,\ldots,y_{d-1}):y_d)\cap I^{n+1})$.$\square$

By the above claim, $x\in (y_1,\ldots,y_{d-1})+((y_1,\ldots,y_{d-1}):y_d)\cap I^{n})\subseteq(y_1,\ldots,y_{d-1})+I^{n}$ for all $n\geq d$. This implies
$x\in (y_1,\ldots,y_{d-1})$ and $\rho$ is injective. Thus by \eqref{eqn-18/10}
  $$e_d(I)=-\lambda_R\left(\frac{((y_1,\ldots,y_{d-1}):y_d)\cap (I^{d-1}+(y_1,\ldots,y_{d-1}))}{(y_1,\ldots,y_{d-1})}\right).$$
This completes the proof.
\end{proof}
\begin{theorem}\label{theorem-main-2}
  Let $(R,\m)$ be a Noetherian local ring of dimension $d\geq 2$ and $\depth R\geq d-1$. Let $Q$ be a parameter ideal of $R$ such that
  $b_i(G(Q))\geq 0$ for $0\leq i\leq d-2$.
 Then
 \begin{equation*}
  -\lambda_R(\h_{\m}^{d-1}(R))\leq e_d(Q).
 \end{equation*}
\end{theorem}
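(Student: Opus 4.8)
The plan is to pass to a large power $I=Q^{l}$, apply the length formula of Lemma~\ref{lemma-main-1} to write $e_d(I)$ as a negative length, and then bound that length by $\lambda_R(\h_\m^{d-1}(R))$ through a telescoping chain of local cohomology modules. As usual I may assume $R/\m$ is infinite, since passing to $R[X]_{\m R[X]}$ changes neither the $e_i$ nor the lengths of the relevant local cohomology modules. Write $Q=(x_1,\ldots,x_d)$ with $x_1^*,\ldots,x_d^*$ superficial, fix $l\gg 0$, and set $I=Q^{l}$ and $y_i=x_i^{l}$. Since $H(I,n)=\lambda_R(R/Q^{ln})=H(Q,ln)$ for $n\gg 0$, the Hilbert--Samuel polynomials satisfy $P(I,n)=P(Q,ln)$; evaluating at $n=0$ gives $(-1)^de_d(I)=P(I,0)=P(Q,0)=(-1)^de_d(Q)$, so $e_d(Q)=e_d(I)$ and it suffices to bound $e_d(I)$ from below.

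Next I would check that $I$ meets the hypotheses of Lemma~\ref{lemma-main-1}. By Lemma~\ref{lemma-4}, $J=(y_1,\ldots,y_d)$ is a minimal reduction with $I^{d}=JI^{d-1}$, the sequence $y_1^*,\ldots,y_{d-1}^*$ is superficial, and the vanishing \eqref{eqn-lemma-main-1} holds for $l\gg 0$; in particular $\h_\m^{i}(G(I))=0$ for $0\leq i\leq d-3$. The role of the stronger hypothesis (up to $i=d-2$) is to upgrade the depth of $G(I)$: by \cite[Lemma 2.4]{Hoa} one has $b_{d-2}(G(I))\geq \lfloor b_{d-2}(G(Q))/l\rfloor\geq 0$, while Lemma~\ref{lemma-4} gives $a_{d-2}(G(I))\leq -1$; since $b_{d-2}\leq a_{d-2}$ whenever the module is nonzero, these force $\h_\m^{d-2}(G(I))=0$, and hence $\depth G(I)\geq d-1$. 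Lemma~\ref{lemma-main-1} then yields
\begin{equation*}
 e_d(I)=-\lambda_R(N),\qquad N:=\frac{((y_1,\ldots,y_{d-1}):y_d)\cap(I^{d-1}+(y_1,\ldots,y_{d-1}))}{(y_1,\ldots,y_{d-1})},
\end{equation*}
so the theorem reduces to the inequality $\lambda_R(N)\leq \lambda_R(\h_\m^{d-1}(R))$.

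For this, $y_1^*,\ldots,y_{d-1}^*$ being a regular sequence in $G(I)$ forces $y_1,\ldots,y_{d-1}$ to be a regular sequence in $R$; writing $R_j=R/(y_1,\ldots,y_j)$ we then have $\dim R_j=d-j$ and $\depth R_j\geq d-1-j$. As $\bar y_d$ is a parameter of the one-dimensional ring $R_{d-1}$, the submodule $((y_1,\ldots,y_{d-1}):y_d)/(y_1,\ldots,y_{d-1})=(0:_{R_{d-1}}\bar y_d)$ is annihilated by a power of $\m$, so $N\hookrightarrow \h_\m^{0}(R_{d-1})$. For each $1\leq j\leq d-1$ I would feed the short exact sequence $0\to R_{j-1}\xrightarrow{y_j}R_{j-1}\to R_j\to 0$ into the long exact sequence of local cohomology; because $\depth R_{j-1}\geq d-j>d-1-j$, the term $\h_\m^{d-1-j}(R_{j-1})$ vanishes and the connecting map becomes an injection $\h_\m^{d-1-j}(R_j)\hookrightarrow \h_\m^{d-j}(R_{j-1})$. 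Telescoping over $j=1,\ldots,d-1$ produces
\begin{equation*}
 \h_\m^{0}(R_{d-1})\hookrightarrow \h_\m^{1}(R_{d-2})\hookrightarrow\cdots\hookrightarrow \h_\m^{d-1}(R),
\end{equation*}
whence $\lambda_R(N)\leq \lambda_R(\h_\m^{0}(R_{d-1}))\leq \lambda_R(\h_\m^{d-1}(R))$, and therefore $e_d(Q)=e_d(I)=-\lambda_R(N)\geq -\lambda_R(\h_\m^{d-1}(R))$.

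The main obstacle is the passage from $e_d(I)$ to $\h_\m^{d-1}(R)$. The formula of Lemma~\ref{lemma-main-1} only becomes available once $\depth G(I)\geq d-1$, so one crucial point is exploiting the extra hypothesis at $i=d-2$ via the $b\leq a$ squeeze to kill $\h_\m^{d-2}(G(I))$; the other is controlling the string of connecting maps so that the finite-length module $N$ embeds into $\h_\m^{d-1}(R)$. The depth-$(d-1)$ assumption on $R$ is precisely what keeps each intermediate module $\h_\m^{d-1-j}(R_{j-1})$ zero, which is what makes every connecting map injective; if $\lambda_R(\h_\m^{d-1}(R))$ is infinite the asserted bound is vacuous and there is nothing to prove.
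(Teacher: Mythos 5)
Your proposal is correct and follows essentially the same route as the paper's own proof: pass to $I=Q^{l}$ for $l\gg 0$, combine Lemma \ref{lemma-4} with Hoa's lemma to force $\h_{\mathcal{M}}^{d-2}(G(I))=0$ and hence $\depth G(I)\geq d-1$, apply Lemma \ref{lemma-main-1}, embed the resulting colon-quotient into $\h_{\m}^{0}(R/(y_1,\ldots,y_{d-1}))$, and telescope the local cohomology length inequalities up to $\h_{\m}^{d-1}(R)$. The extra details you supply (the verification that $e_d(Q)=e_d(I)$, the annihilation argument giving the embedding into $\h_{\m}^{0}(R_{d-1})$, and the explicit injectivity of the connecting maps) are points the paper leaves implicit, but the argument is the same.
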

\begin{proof}
 We may assume that $R/\m$ is infinite. Let $Q=(x_1,\ldots,x_d)$ such that $x_1^*,\ldots,x_d^*$ is a superficial sequence.
 For an integer $l\gg 0$, let $I=Q^l$ and $y_i=x_i^l$ for $1\leq i\leq d$. Then $y_1^*,\ldots,y_d^*$ is a superficial sequence with respect to $I$ and
\begin{equation}\label{equation:reduction-no-of-I}
 I^{d}=(y_1,\ldots,y_d)I^{d-1}.
\end{equation}
By Lemma \ref{lemma-4}, for $l\gg 0,$ $\depth G(I)\geq d-2.$
Since $b_{d-2}(G(Q))\geq 0$, we get $b_{d-2}(G(I))\geq \lfloor b_{d-2}(G(Q))/l \rfloor\geq 0$ by
\cite[Lemma 2.4]{Hoa}. Using \eqref{31-aug-6-e1}, we get that $\h_{\mathcal{M}}^{d-2}(G(I))=0$ for $l\gg 0$. Hence $\depth G(I)\geq d-1.$
By Lemma \ref{lemma-main-1},
\begin{align}
e_d(Q)=e_d(I)&=-\lambda_R\left(\frac{((y_1,\ldots,y_{d-1}):y_d)\cap (I^{d-1}+(y_1,\ldots,y_{d-1}))}{(y_1,\ldots,y_{d-1})}\right)\nonumber\\
 &\geq-\lambda_R\left(\frac{((y_1,\ldots,y_{d-1}):y_d)}{(y_1,\ldots,y_{d-1})}\right)\nonumber\\
 &\geq-\lambda_R(\h_{\m}^0(R/(y_1,\ldots,y_{d-1})))\label{eqn-ed-lower-bound}
\end{align}
where the last inequality holds since

  \begin{align*}
 \frac{((y_1,\ldots,y_{d-1}):y_d)}{(y_1,\ldots,y_{d-1})}\subseteq \h_{\m}^0(R/(y_1,\ldots,y_{d-1}))
 \end{align*}
Now let $R_i=R/(y_1,\ldots,y_{i})$ for $1\leq i\leq d-1$ and $R_0=R$. Note that $y_1,\ldots,y_{d-1}$ is a regular sequence in $R$ and $\depth R_i\geq d-i-1$.
For $0\leq i\leq d-2$, the exact sequence $$0\to R_i\xrightarrow{y_{i+1}} R_i\to R_{i+1}\to 0$$
gives the long exact sequence of local cohomology modules
$$0\to \h_{\m}^{d-i-2}(R_{i+1})\to \h_{\m}^{d-i-1}(R_{i})\xrightarrow{~y_{i+1}} \h_{\m}^{d-i-1}(R_{i})\to\ldots.$$
Thus for $0\leq i\leq d-2$,
\begin{equation}\label{eqn-length-of-H-m-decreasing}
 \lambda_R(\h_{\m}^{d-i-2}(R_{i+1}))\leq \lambda_R(\h_{\m}^{d-i-1}(R_{i})).
 \end{equation}
Putting the values of $i$ successively, we get
$$\lambda_R(\h_{\m}^0(R_{d-1}))\leq \lambda_R(\h_{\m}^1(R_{d-2}))\leq\ldots\leq \lambda_R(\h_{\m}^{d-1}(R)).$$
Hence $e_d(Q)\geq -\lambda_R(\h_{\m}^{d-1}(R))$ by \eqref{eqn-ed-lower-bound}.
\end{proof}
\begin{corollary}\label{corr-ei-grteq}
  Let $(R,\m)$ be a Noetherian local ring of dimension $d\geq 2$. Let $Q$ be a parameter ideal of $R$
  such that $\depth G(Q)\geq d-1$.
 Then for $2\leq i\leq d$,
 \begin{equation}\label{eqn-23-3-corr}
   -\lambda_R(\h_{\m}^{d-1}(R))\leq e_i(Q).
  \end{equation}
\end{corollary}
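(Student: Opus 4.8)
The plan is to reduce the statement for general $2\leq i\leq d$ to the top coefficient case $i=d$, and then invoke the lower bound of Theorem~\ref{theorem-main-2}. First I would reduce to the case of an infinite residue field by the standard $R\to R[X]_{\m R[X]}$ faithfully flat base change, which preserves dimension, depth, the depth of $G(Q)$, the Hilbert coefficients, and the length of $\h_{\m}^{d-1}(R)$. Having done so, write $Q=(x_1,\ldots,x_d)$ so that $x_1^*,\ldots,x_d^*$ is a superficial sequence in $G(Q)$. Since $\depth G(Q)\geq d-1$, the initial forms $x_1^*,\ldots,x_{d-1}^*$ form a regular sequence in $G(Q)$, and hence so does any initial segment $x_1^*,\ldots,x_{d-i}^*$ for $2\leq i\leq d$.

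The key reduction step is to pass to the quotient rings $R_j=R/(x_1,\ldots,x_j)$, exactly as in the proof of Corollary~\ref{corr-ei-lesseq-0}. Because $x_1^*,\ldots,x_{d-1}^*$ is a regular sequence in $G(Q)$, one has $G(QR_j)\cong G(Q)/(x_1^*,\ldots,x_j^*)G(Q)$ for $0\leq j\leq d-1$, and quotienting a graded ring by a regular sequence of initial forms drops the depth by exactly the number of elements removed, so $\depth G(QR_j)\geq d-1-j=\dim R_j -1$ for $0\leq j\leq d-1$. Moreover $QR_j$ is a parameter ideal of the $(d-j)$-dimensional ring $R_j$, and the superficiality is preserved so that $\depth G(QR_j)\geq \dim(R_j)-1$ means the hypothesis of Theorem~\ref{theorem-main-2} holds for $R_j$. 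The standard superficial-reduction identity $e_i(Q)=e_i(QR_j)$ for $i>j$ then lets me identify the coefficient $e_i(Q)$ with the \emph{last} coefficient $e_{d-j}(QR_j)$ upon choosing $j=d-i$; concretely, $e_i(Q)=e_{d-(d-i)}(QR_{d-i})=e_{\dim R_{d-i}}(QR_{d-i})$.

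With this identification in hand, applying Theorem~\ref{theorem-main-2} to the ring $R_{d-i}$ (which has dimension $i\geq 2$, satisfies $\depth G(QR_{d-i})\geq \dim R_{d-i}-1$, hence $b_k(G(QR_{d-i}))\geq 0$ for all relevant $k$) yields
\begin{equation*}
 e_i(Q)=e_{\dim R_{d-i}}(QR_{d-i})\geq -\lambda_{R}\bigl(\h_{\m}^{\dim R_{d-i}-1}(R_{d-i})\bigr)=-\lambda_R\bigl(\h_{\m}^{i-1}(R_{d-i})\bigr).
\end{equation*}
It then remains only to compare $\lambda_R(\h_{\m}^{i-1}(R_{d-i}))$ against $\lambda_R(\h_{\m}^{d-1}(R))$. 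This is handled by the same telescoping argument used at the end of the proof of Theorem~\ref{theorem-main-2}: since $x_1,\ldots,x_{d-i}$ is a regular sequence on $R$ (as $\depth R\geq d-1\geq d-i$), the short exact sequences $0\to R_{j}\xrightarrow{x_{j+1}}R_{j}\to R_{j+1}\to 0$ induce inequalities $\lambda_R(\h_{\m}^{d-j-2}(R_{j+1}))\leq \lambda_R(\h_{\m}^{d-j-1}(R_{j}))$, which chained together give $\lambda_R(\h_{\m}^{i-1}(R_{d-i}))\leq\lambda_R(\h_{\m}^{d-1}(R))$, yielding \eqref{eqn-23-3-corr}.

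The main obstacle I anticipate is bookkeeping rather than conceptual depth: one must verify carefully that the depth of $G(Q)$ really does descend correctly to each $G(QR_j)$ (that the quotient is by initial forms forming a regular sequence, not merely by elements of $R$ that happen to be regular), and that the superficial-reduction equality $e_i(Q)=e_i(QR_j)$ is valid in this non-Cohen--Macaulay setting for the precise range $i>j$ under only $\depth R\geq d-1$. Once these standard facts are pinned down, the argument is a clean reduction to Theorem~\ref{theorem-main-2} followed by the local-cohomology length comparison already established there.
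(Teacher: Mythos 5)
Your proposal is correct and follows essentially the same route as the paper's own proof: reduce modulo the superficial elements $x_1,\ldots,x_{d-i}$ to identify $e_i(Q)$ with the last coefficient $e_i(QR_{d-i})$, apply Theorem~\ref{theorem-main-2} to $R_{d-i}$ (the depth hypothesis on $G(QR_{d-i})$ giving the required vanishing of the lower local cohomology, hence $b_k\geq 0$), and then chain the inequalities \eqref{eqn-length-of-H-m-decreasing} to replace $\lambda_R(\h_{\m}^{i-1}(R_{d-i}))$ by $\lambda_R(\h_{\m}^{d-1}(R))$. The bookkeeping points you flag (depth descent via initial forms and the identity $e_i(Q)=e_i(QR_j)$ for $i>j$) are exactly the facts the paper also relies on.
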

\begin{proof}
We may assume that the residue field $R/\m$ is infinite. Let $Q=(y_1,\ldots,y_d)$ such that  $y_1^*,\ldots,y_d^*$ is a superficial sequence
in $G(Q)$.
Let $R_i=R/(y_1,\ldots,y_i)$ for $1\leq i\leq d-1$ and $R_0=R$.
Since $y_1^*,\ldots,y_{d-1}^*$ is a regular sequence in $G(Q)$, $G(QR_{d-i})\cong G(Q)/(y_1^*,\ldots,y_{d-i}^*)G(Q)$ and 
$\depth G(QR_{d-i})\geq i-1.$
Hence
by Theorem \ref{theorem-main-2},
\begin{equation}\label{eqn-bound-ei}-\lambda_R(\h_{\m}^{i-1}(R_{d-i}))\leq e_i(Q) \text{~~for ~~} 2\leq i\leq d.\end{equation}
Since $y_1,\ldots,y_{d-1}$ is a regular sequence in $R$, we have from \eqref{eqn-length-of-H-m-decreasing} that
$$\lambda_R(\h_{\m}^{i-1}(R_{d-i}))\leq \lambda_R(\h_{\m}^{i}(R_{d-i-1}))$$
for  $2\leq i\leq d-1$. This implies
$$\lambda_R(\h_{\m}^{i-1}(R_{d-i}))\leq \lambda_R(\h_{\m}^{i}(R_{d-i-1}))\leq \ldots \leq \lambda_R(\h_{\m}^{d-1}(R)).$$
Therefore by \eqref{eqn-bound-ei}, we get
\begin{equation*}
    -\lambda_R(\h_{\m}^{d-1}(R))\leq e_i(Q) \mbox{ for } 2\leq i\leq d.
\end{equation*}
\end{proof}
We include an example where \eqref{eqn-23-3-corr} does not hold.
\begin{example}
 We recall Example \ref{example-gcm-22-1} with $\dim R=d=5$ and $t=2$. Then $\dim A=5$ and $\depth A=2$.
 By \eqref{eqn-22-1}, $e_3(Q,A)<0$ for a parameter ideal $Q$ of $A$ where as $-\lambda_R(\h_{\m}^{d-1}(A))=0.$
\end{example}
\section{Vanishing of coefficients}\label{vanishing-of-coefficients}
In this section, we generalize partially a result of Goto and Ozeki \cite[Theorem 3.2]{goto-ozeki} on the vanishing of $e_2(Q)$ in two dimensional local rings.
In Theorem \ref{theorem-vanishing-of-ed}, we obtain a necessary condition similar to that given in \cite{goto-ozeki} for the vanishing of $e_d(Q)$ for a
parameter ideal $Q$ with certain conditions.
Further, we characterize the vanishing of
$e_2(Q)$ in rings of dimension $d\geq 2$ and $\depth $ at least $d-1$ in Theorem \ref{theorem-e2} which extends \cite[Theorem 3.2]{goto-ozeki} to
dimension $d$.

Let $x_0=0$. A sequence $x_1,\ldots,x_r\in\m$  is called a $d$-sequence if (i) $x_i$ is not in the ideal generated by the rest of the $x_j$ and (ii)
 $((x_0,\ldots,x_i): x_{i+1}x_j)=((x_0,\ldots,x_i): x_j)$ for $0\leq i\leq r-1$ and for all $j\geq i+1$. See \cite{huneke-d} and \cite{trung-abs-sup}
 for the theory of $d$-sequences. Note that the hypothesis of the following theorem is satisfied if $\depth G(Q)\geq d-1.$
\begin{theorem}\label{theorem-vanishing-of-ed}
 Let $(R,\m)$ be a Noetherian local ring of dimension $d\geq 2$ and $\depth R\geq d-1$. Let $Q=(x_1,\ldots,x_d)$ be a parameter ideal of $R$ such that
  $x_1^*,\ldots,x_{d-1}^*$ is a superficial sequence. Let $b_i(G(Q))\geq 0$ for $0\leq i\leq d-3$ and $[\h_\mathcal{M}^{d-2}(G(Q))]_n=0$ for all $n\ll 0$ i.e.
  $\h_\mathcal{M}^{d-2}G(Q)$ is finitely graded.
 Suppose $$e_d(Q)=0.$$ Then $x_1^l,\ldots,x_{d-1}^l,x_d^{(d-1)l}$ is a $d$-sequence in $R$ for all integers $l\geq 1$.
\end{theorem}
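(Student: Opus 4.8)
The plan is to peel the $d$-sequence condition down to a single colon-stabilization statement for each $l$, to verify that statement for large $l$ with the tools of Sections \ref{section-formula}--\ref{bounding-the-hilbert-coefficients}, and then to push it down to every $l$. First I would assume $R/\m$ infinite and write $z_i=x_i^l$ for $1\le i\le d-1$, $z_d=x_d^{(d-1)l}$, and $W_l=(x_1^l,\ldots,x_{d-1}^l)$. Since $\depth R\ge d-1$ and $x_1,\ldots,x_{d-1}$ are part of a system of parameters, they form a (permutable) regular sequence, and so does $z_1,\ldots,z_{d-1}$. Condition (i) in the definition of a $d$-sequence is automatic because $(z_1,\ldots,z_d)$ is a parameter ideal, hence minimally generated by $d$ elements. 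For condition (ii) with $0\le i\le d-2$, permutability makes $z_{i+1}$ a nonzerodivisor modulo $(z_1,\ldots,z_i)$, which gives $(z_1,\ldots,z_i):z_{i+1}z_j=(z_1,\ldots,z_i):z_j$ for every $j\ge i+1$. Thus only the case $i=d-1$, $j=d$ survives, namely $(W_l:z_d^2)=(W_l:z_d)$; as $R/W_l$ is one-dimensional with $z_d=x_d^{(d-1)l}$ a parameter on it, this is equivalent to
\[
 x_d^{(d-1)l}\,\h_{\m}^0\!\left(R/(x_1^l,\ldots,x_{d-1}^l)\right)=0. \qquad (\star_l)
\]
So it suffices to prove $(\star_l)$ for all $l\ge1$.

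A preliminary observation is that $e_d$ is unchanged by taking powers: since $P(Q^l,x)=P(Q,lx)$, evaluation at $x=0$ gives $(-1)^de_d(Q^l)=P(Q^l,0)=P(Q,0)=(-1)^de_d(Q)$, so $e_d(Q^l)=e_d(Q)=0$ for every $l\ge1$.

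Next I would settle $(\star_l)$ for $l\gg0$. By \eqref{equation:ed-negative}, $\lambda_R([\h_{\mathcal{M}}^{d-1}(G(Q^l))]_0)=-e_d(Q)=0$ for $l\gg0$, so $[\h_{\mathcal{M}}^{d-1}(G(Q^l))]_0=0$; combined with $b_i(G(Q))\ge0$ for $0\le i\le d-3$ and the finiteness of $\h_{\mathcal{M}}^{d-2}(G(Q))$, Lemma \ref{lemma-4}\ref{lemma-4-item-3} gives $\depth G(Q^{l_0})\ge d-1$ for some large $l_0$. Put $I=Q^{l_0}$, $y_i=x_i^{l_0}$, $W=W_{l_0}$; by \eqref{eqn-reduction-no-of-I}, $I^d=(y_1,\ldots,y_d)I^{d-1}$, so Lemma \ref{lemma-main-1} applies and, because $e_d(I)=0$, \eqref{eqn-main-formula-for-ed} forces $(W:y_d)\cap(I^{d-1}+W)=W$, while the stabilization Claim in that proof yields $(W:y_d)\cap I^n\subseteq W$ for all $n\ge d$. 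Now for $u\in(W:x_d^\infty)$ set $v=u\,y_d^{d-1}=u\,x_d^{(d-1)l_0}$; since $x_d^{(d-1)l_0}\in I^{d-1}$ we have $v\in(W:y_d^\infty)\cap I^{d-1}$, and looking at the least $M$ with $v\,y_d^{M}\in W$, the inclusion $(W:y_d)\cap I^n\subseteq W$ ($n\ge d$) forces $M\le1$, whence $v\in(W:y_d)\cap I^{d-1}\subseteq W$. This is exactly $(\star_{l_0})$, and the power $x_d^{(d-1)l_0}$ is pinned by the reduction number bound $\le d-1$ in \eqref{eqn-reduction-no-of-I}.

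The hard part will be upgrading $(\star_{l_0})$ for one large $l_0$ to $(\star_l)$ for every $l$. The obvious comparison $R/W_l\to R/W_{l_0}$, multiplication by $\prod_{j=1}^{d-1}x_j^{l_0-l}$, is injective and $R$-linear (its kernel is $(W_{l_0}:\prod_{j=1}^{d-1}x_j^{l_0-l})/W_l=0$ because $x_1^{l_0},\ldots,x_{d-1}^{l_0}$ is a regular sequence), so it embeds $\h_{\m}^0(R/W_l)$ into $\h_{\m}^0(R/W_{l_0})$ compatibly with multiplication by $x_d$; but this only transports $(\star_{l_0})$ to the coarse relation $x_d^{(d-1)l_0}\,\h_{\m}^0(R/W_l)=0$, not to the sharp $(\star_l)$. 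My plan to overcome this is to rerun the argument of the previous paragraph for each $l$ rather than descending: the two inputs it uses---the colon identity $(W_l:x_d^l)\cap((Q^l)^{d-1}+W_l)=W_l$ and the stabilization $(W_l:x_d^l)\cap(Q^l)^n\subseteq W_l$ for $n\ge d$---should be derivable for every $l$ from $e_d(Q^l)=0$ together with the reduction number bound $r_{(x_1^l,\ldots,x_d^l)}(Q^l)\le d-1$ of \eqref{eqn-reduction-no-of-I}, both of which hold for all $l$, instead of from $\depth G(Q^l)\ge d-1$, which Lemma \ref{lemma-4} guarantees only for $l\gg0$. Concretely, the delicate point is to bound the vanishing of $\h_{\m}^0$ of the associated graded ring of the image of $Q^l$ in $R/W_l$ by the reduction number alone, replacing the appeal to Lemma \ref{lemma-3}; once that is in place, the descending induction above yields $(\star_l)$ for every $l\ge1$, completing the proof.
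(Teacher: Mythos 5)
Your setup is sound: the reduction of the $d$-sequence condition to the single statement $(\star_l)$ is correct (conditions for $i\leq d-2$ do follow from the regularity of $x_1^l,\ldots,x_{d-1}^l$, and condition (i) from the fact that a system of parameters is minimally generated by $d$ elements), the observation $e_d(Q^l)=e_d(Q)$ is correct, and your treatment of $l\gg 0$ — via \eqref{equation:ed-negative}, Lemma~\ref{lemma-4}\ref{lemma-4-item-3}, the identity \eqref{eqn-main-formula-for-ed} of Lemma~\ref{lemma-main-1}, and the stabilization claim inside its proof — is exactly the paper's argument. But the proof is genuinely incomplete at the point you yourself flag as ``the hard part'': passing from $(\star_{l_0})$ for large $l_0$ to $(\star_l)$ for every $l\geq 1$. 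Your plan is to rerun the cohomological argument at each fixed $l$, deriving the colon identity and the stabilization $(W_l:x_d^l)\cap (Q^l)^n\subseteq W_l$ ($n\geq d$) from $e_d(Q^l)=0$ and the reduction-number bound alone. This is not carried out, and it is doubtful it can be: both inputs rest on $\depth G(Q^l)\geq d-1$, which Lemma~\ref{lemma-4} supplies only for $l\gg 0$. Concretely, the identity $e_d(I)=-\lambda_R([\h_{\mathcal{M}}^{d-1}(G(I))]_0)$ behind Lemma~\ref{lemma-main-1} needs $[\h_{\mathcal{M}}^{i}(G(I))]_0=0$ for $i\neq d-1$ (Blancafort's formula plus Lemma~\ref{lemma-1}), and the stabilization needs $[\h_{\mathcal{M}}^{0}(G(IR_{d-1}))]_n=0$ for $n\geq d$, which comes from Lemma~\ref{lemma-3} applied to the regular sequence of initial forms. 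For small $l$ the depth of $G(Q^l)$ is uncontrolled; from the reduction-number bound alone one only gets, in the one-dimensional quotient $\bar{R}=R/W_l$, that an element of $(Q^l)^n\bar R$ whose initial form is killed by a power of $\mathcal{M}$ lies in $(Q^l)^{n+1}\bar R+(0:_{\bar R}\bar{x}_d^{\,m})$ for some $m$, and without control of $\h_{\m}^0(\bar R)\cap (Q^l)^n\bar R$ this does not yield the needed vanishing. Were your plan to work, the depth hypothesis of Lemma~\ref{lemma-main-1} would be essentially superfluous.

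The paper closes exactly this gap with an elementary descent that avoids redoing any cohomology at level $l$. Fix $N$ so that $((x_1^N,\ldots,x_{d-1}^N):x_d^N)\cap Q^{N(d-1)}\subseteq (x_1^N,\ldots,x_{d-1}^N)$, which is what the large-$l$ step provides. Given $l<N$ and $y\in ((x_1^l,\ldots,x_{d-1}^l):x_d^l)\cap Q^{l(d-1)}$, multiply by $x_1^{N-l}\cdots x_{d-1}^{N-l}$ and $x_d^{N-l}$: this lands $x_1^{N-l}\cdots x_{d-1}^{N-l}y$ in $((x_1^N,\ldots,x_{d-1}^N):x_d^N)\cap Q^{N(d-1)}$, hence in $(x_1^N,\ldots,x_{d-1}^N)$; then an induction on $d$, using only that $x_1,\ldots,x_{d-1}$ is a regular sequence, cancels the factors $x_i^{N-l}$ to give $y\in (x_1^l,\ldots,x_{d-1}^l)$. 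This multiplication-and-cancellation argument is the missing idea in your proposal; with it in hand, your $(\star_l)$ holds for every $l\geq 1$ and the rest of your outline goes through.
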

\begin{proof}
 For $l\gg 0,$ let $I=Q^l$ and $J=(x_1^l,\ldots,x_d^l).$ Then ${(x_1^l)}^*,\ldots,{(x_d^l)}^*$ is a superficial sequence with respect to $I$ and $I^d=JI^{d-1}.$
 Suppose $e_d(Q)=0$, then \eqref{equation:ed-negative} implies
$[\h_{\mathcal{M}}^{d-1}(G(I))]_0=0$. Hence by Lemma \ref{lemma-4},  $\depth G(I)\geq d-1$.

By Lemma \ref{lemma-main-1}, we get that
$$e_d(Q)=-\lambda_R\left(\frac{((x_1^l,\ldots,x_{d-1}^l):x_d^l)\cap (I^{d-1}+(x_1^l,\ldots,x_{d-1}^l))}{(x_1^l,\ldots,x_{d-1}^l)}\right).$$
 $e_d(Q)=0$ implies that
$((x^l_1,\ldots,x^l_{d-1}):x^l_d)\cap (I^{d-1}+(x^l_1,\ldots,x^l_{d-1}))=(x^l_1,\ldots,x^l_{d-1})$ for all $l\gg 0$.
Let $N\geq 1$ be an integer such that for all  $l\geq N$,
\begin{align}
((x^l_1,\ldots,x^l_{d-1}):x^l_d)\cap I^{d-1}\subseteq (x^l_1,\ldots,x^l_{d-1}).\label{eqn-claim-is-true-for-l>N}
\end{align}
{\bf{Claim:}} For all $l\geq 1$
\begin{equation}\label{eqn-containment-for-all-l}
((x^l_1,\ldots,x^l_{d-1}):x^l_d)\cap I^{d-1} \subseteq (x^l_1,\ldots,x^l_{d-1}).
\end{equation}
{\it {Proof of Claim.}}
Let $1\leq l< N$ and $y\in ((x^l_1,\ldots,x^l_{d-1}):x^l_d)\cap I^{d-1}$. Then
\begin{align*}
x_d^N\cdot x_1^{N-l}\cdot x_2^{N-l}\cdots x_{d-1}^{N-l}\cdot y&=x_d^{N-l}\cdot x_1^{N-l}\cdot x_2^{N-l}\cdots x_{d-1}^{N-l}\cdot x_d^ly\\
&\in x_d^{N-l}\cdot x_1^{N-l}\cdot x_2^{N-l}\cdots x_{d-1}^{N-l}(x_1^l,\ldots,x_{d-1}^l)\\
&\subseteq(x_1^N,\ldots,x_{d-1}^N).
\end{align*}
This implies \begin{align}
              x_1^{N-l}\cdot x_2^{N-l}\cdots x_{d-1}^{N-l}\cdot y&\in((x_1^N,\ldots,x_{d-1}^N):x_d^N)\cap Q^{(N-l)(d-1)}I^{d-1}\nonumber\\
              &=((x_1^N,\ldots,x_{d-1}^N):x_d^N)\cap Q^{N(d-1)}\nonumber\\
              &\subseteq(x^N_1,\ldots,x^N_{d-1})\label{eqn-d-seq}
      \end{align}
 where the last containment is due to \eqref{eqn-claim-is-true-for-l>N}.
 Now we show by induction on $d$ that if $y$ is such that $\eqref{eqn-d-seq}$ holds then $y\in (x^l_1,\ldots,x^l_{d-1})$.
 Note that $x_1,\ldots,x_{d-1}$ is a regular sequence. For $d=2$, $~x_1^{N-l}y\in (x_1^N)\implies y\in (x_1^l)$.
 Let $d>2$ and $\eqref{eqn-d-seq}$ holds. Set $y^{\prime}= x_1^{N-l}y$ and $R_1=R/(x_1^N)$. Let $\overline{\alpha}$ denote the image of an element
 $\alpha\in R$ in $R_1$. Then
\begin{align*}
 &x_2^{N-l}\cdot x_3^{N-l}\cdots x_{d-1}^{N-l}\cdot y^{\prime}
 \in (x^N_1,\ldots,x^N_{d-1})\\
 \implies&\overline{x}_2^{N-l}\cdot \overline{x}_3^{N-l}\cdots\overline{x}_{d-1}^{N-l}\cdot\overline{y^{\prime}}\in (\overline{x}_2^N,\ldots,\overline{x}^N_{d-1})R_1\\
 \implies& \overline{y^{\prime}}\in (\overline{x}_2^l,\ldots,\overline{x}^l_{d-1})R_1 \hspace{3cm} \text{ [by induction hypothesis]}\\
 \implies&   x_1^{N-l}y=y^{\prime}\in (x_2^l,\ldots,x_{d-1}^l)+(x_1^N)\\
 \implies& y\in (x_1^l,\ldots,x_{d-1}^l)
\end{align*}
where the last statement holds since $x_1^{N-l}$ is regular in $R/(x_2^l,\ldots,x_{d-1}^l)$. $\square$

To see that $x_1^l,\ldots,x_{d-1}^l,x_d^{(d-1)l}$ is a $d$-sequence in $R$, we use \eqref{eqn-containment-for-all-l} repeatedly. For this purpose, let $l\geq 1$ and
\begin{align*}
  r &\in ((x_1^l,\ldots,x_{d-1}^l):x_d^{(d-1)l})\cap (x_d^{(d-1)l})\\
 \implies rx_d^{(d-2)l} &\in  ((x_1^l,\ldots,x_{d-1}^l):x^l_d)\cap I^{d-1} \subseteq (x^l_1,\ldots,x^l_{d-1}) \\
  \implies rx_d^{(d-3)l}&\in  ((x_1^l,\ldots,x_{d-1}^l):x^l_d)\cap I^{d-1} \subseteq (x^l_1,\ldots,x^l_{d-1})\\
 &\vdots&\\
 \implies r&\in ((x_1^l,\ldots,x_{d-1}^l):x^l_d)\cap I^{d-1} \subseteq (x^l_1,\ldots,x^l_{d-1}).
\end{align*}
This implies that for $l\geq 1,$ $$((x_1^l,\ldots,x_{d-1}^l):x_d^{(d-1)l})\cap (x_1^l,\ldots,x_{d-1}^l,x_d^{(d-1)l})=(x^l_1,\ldots,x^l_{d-1}).$$
Since $x_1^l,\ldots,x_{d-1}^l$ is a regular sequence in $R$, it follows that
$x_1^l,\ldots,x_{d-1}^l,x_d^{(d-1)l}$ is a $d$-sequence for $l\geq 1$.
\end{proof}
In the next theorem, we obtain equivalent conditions for the equality $e_2(Q)=0$ which recovers \cite[Theorem 3.2]{goto-ozeki}.
It also unifies the necessary and sufficient conditions given by Mccune \cite[Theorem 3.5]{Lori} for $e_2(Q)=0.$
Recall that the postulation number of $Q$, denoted
 by $\eta(Q)$, is defined as
\begin{equation*}
\eta(Q):=\text{~min}\{i~|~H(Q,n)=P(Q,n) \text{  for all } n> i\}.
\end{equation*}
\begin{theorem}\label{theorem-e2}
   Let $(R,\m)$ be a Noetherian local ring of dimension $d\geq 2$ and $\depth R \geq d-1$. Let $Q=(x_1,\ldots,x_d)$ be a parameter ideal of $R$ such that $x_1^*,\ldots,x_d^*$ is a superficial sequence.
   Then the following assertions hold.
   \begin{enumerate}
\item \label{theorem-e2-item1} $-\lambda_R(\h_{\m}^{d-1}(R))\leq e_2(Q)\leq 0$.
   \item \label{corr-e2-part-2} The following statements are equivalent:
   \begin{enumerate}[label=(\alph*)]
    \item \label{corr-e2-a} $e_2(Q)=0$;
    \item \label{corr-e2-b0} $x_1,\ldots,x_{d-2},x_{d-1}^l,x_d^l$ is $d$-sequence in $R$ for all integers $l\geq 1$;
    \item \label{corr-e2-b} $x_1,\ldots,x_d$ is $d$-sequence in $R$;
    \item \label{corr-e2-c} $\depth G(Q)\geq d-1$ and $\eta(Q)< 2-d$.
   \end{enumerate}
\item \label{theorem-e2-item3} $e_2(Q)=0\implies e_i(Q)=0$ for $2\leq i\leq d$.
   \end{enumerate}
\end{theorem}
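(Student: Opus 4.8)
The plan is to reduce every assertion to the two-dimensional case by cutting down with the first $d-2$ members of the superficial sequence and then transporting the information back. Since $\depth R\geq d-1$ and $x_1^*,\dots,x_d^*$ is a superficial sequence, $x_1,\dots,x_{d-1}$ is a regular sequence in $R$; write $R_i=R/(x_1,\dots,x_i)$ and $\bar Q=QR_{d-2}$, a parameter ideal of the two-dimensional ring $R_{d-2}$ with $\depth R_{d-2}\geq 1$. Because each $x_i$ is a regular superficial element, reduction preserves the relevant coefficients, so $e_2(Q)=e_2(\bar Q)$, and in $R_{d-2}$ the number $e_2(\bar Q)$ is the last coefficient. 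For part \ref{theorem-e2-item1} I would apply the two-dimensional cases of Theorem \ref{theorem-main-(-1)} and Theorem \ref{theorem-main-2} to $\bar Q$: in dimension two the hypothesis $b_i(G(\bar Q))\geq 0$ for $0\leq i\leq d-3$ is vacuous and the hypothesis $b_0(G(\bar Q))\geq 0$ always holds, so $-\lambda_R(\h_{\m}^{1}(R_{d-2}))\leq e_2(\bar Q)\leq 0$. The upper bound transfers immediately; for the lower bound I would run the chain \eqref{eqn-length-of-H-m-decreasing}, which gives $\lambda_R(\h_{\m}^{1}(R_{d-2}))\leq\lambda_R(\h_{\m}^{2}(R_{d-3}))\leq\cdots\leq\lambda_R(\h_{\m}^{d-1}(R))$ since $x_1,\dots,x_{d-2}$ is a regular sequence, yielding $-\lambda_R(\h_{\m}^{d-1}(R))\leq e_2(Q)$.

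For the equivalences in \ref{corr-e2-part-2} I would prove the cycle $\ref{corr-e2-a}\Rightarrow\ref{corr-e2-b0}\Rightarrow\ref{corr-e2-b}\Rightarrow\ref{corr-e2-a}$ together with $\ref{corr-e2-a}\Leftrightarrow\ref{corr-e2-c}$. For $\ref{corr-e2-a}\Rightarrow\ref{corr-e2-b0}$, note that $e_2(\bar Q)=0$ and apply Theorem \ref{theorem-vanishing-of-ed} in dimension two, whose hypotheses are automatic there: the $b_i$-condition is vacuous and $\h_{\mathcal{M}}^{0}(G(\bar Q))$ has finite length, hence is finitely graded. This gives that $\bar x_{d-1}^{\,l},\bar x_d^{\,l}$ is a $d$-sequence in $R_{d-2}$ for every $l\geq 1$, and lifting a $d$-sequence through the regular sequence $x_1,\dots,x_{d-2}$ (see \cite{huneke-d,trung-abs-sup}) yields \ref{corr-e2-b0}. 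Taking $l=1$ gives $\ref{corr-e2-b0}\Rightarrow\ref{corr-e2-b}$. For $\ref{corr-e2-b}\Rightarrow\ref{corr-e2-a}$, the $d$-sequence $x_1,\dots,x_d$ descends modulo the regular sequence $x_1,\dots,x_{d-2}$ to the $d$-sequence $\bar x_{d-1},\bar x_d$ in $R_{d-2}$, whence $e_2(\bar Q)=0$ by the two-dimensional result \cite[Theorem 3.2]{goto-ozeki}, so $e_2(Q)=0$.

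The equivalence with \ref{corr-e2-c} is where the bookkeeping is delicate. Using that a parameter ideal generated by a $d$-sequence has $\depth G(Q)\geq d-1$ (theory of $d$-sequences, \cite{huneke-d,trung-abs-sup}), condition \ref{corr-e2-b} supplies the depth half of \ref{corr-e2-c}; once $\depth G(Q)\geq d-1$ is available, $x_1^*,\dots,x_{d-2}^*$ is a $G(Q)$-regular sequence, so by the Valabrega--Valla criterion the difference formula $H(\bar Q,n)=H(Q,n)-H(Q,n-1)$ holds for all $n$ and each reduction raises the postulation number by exactly one, i.e. $\eta(\bar Q)=\eta(Q)+(d-2)$. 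Since $e_2(\bar Q)=0$ forces $\eta(\bar Q)<0$ in dimension two, this gives $\eta(Q)<2-d$, proving $\ref{corr-e2-a}\Rightarrow\ref{corr-e2-c}$; conversely, $\depth G(Q)\geq d-1$ makes the $x_i^*$ regular, so $\depth G(\bar Q)\geq 1$ and $\eta(\bar Q)=\eta(Q)+(d-2)<0$, and the two-dimensional characterization returns $e_2(\bar Q)=e_2(Q)=0$. I expect the main obstacle to be precisely this control of $\depth G(Q)$ and $\eta(Q)$ under reduction: one must invoke the $d$-sequence structure to guarantee that the $x_i^*$ are genuinely $G(Q)$-regular before the clean postulation shift is legitimate, since a superficial sequence need not be $G(Q)$-regular when $\depth G(Q)$ is small.

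Finally, for part \ref{theorem-e2-item3} I would use only the postulation bound in \ref{corr-e2-c}. As $\eta(Q)<2-d$, we have $H(Q,n)=P(Q,n)$ for all $n\geq 2-d$, while $H(Q,n)=\lambda_R(R/Q^n)=0$ for $n\leq 0$; hence $P(Q,n)=0$ at the $d-1$ integers $n=0,-1,\dots,2-d$. Evaluating the binomial expression for $P(Q,x)$ at these points is triangular: at $n=0$ only the $i=d$ term survives and gives $(-1)^d e_d=0$, at $n=-1$ one obtains $(-1)^d(e_{d-1}+e_d)=0$, and so on, so that successively $e_d=e_{d-1}=\cdots=e_2=0$. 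This proves $e_i(Q)=0$ for $2\leq i\leq d$ and completes the theorem.
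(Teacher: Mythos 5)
Your overall architecture --- cut with $x_1,\ldots,x_{d-2}$, apply Theorems \ref{theorem-main-(-1)}, \ref{theorem-main-2} and \ref{theorem-vanishing-of-ed} to the two-dimensional quotient $R_{d-2}$, lift $d$-sequences back along the regular sequence, and obtain part \eqref{theorem-e2-item3} by evaluating $P(Q,n)$ at $n=0,-1,\ldots,2-d$ --- is essentially the paper's architecture, and those steps are sound. Your one genuine deviation, proving \ref{corr-e2-b}$\Rightarrow$\ref{corr-e2-a} directly from the two-dimensional theorem \cite[Theorem 3.2]{goto-ozeki} rather than through \ref{corr-e2-c} as the paper does, is legitimate, since that theorem is precisely the known $d=2$ case being generalized.

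The gap is the depth half of \ref{corr-e2-a}$\Rightarrow$\ref{corr-e2-c}. You justify $\depth G(Q)\geq d-1$ by asserting that ``a parameter ideal generated by a $d$-sequence has $\depth G(Q)\geq d-1$,'' citing \cite{huneke-d,trung-abs-sup}. No such theorem is available, and the assertion is false in general: in the idealization $A=k[[x,y]]\ltimes k$ (dimension two, depth zero) the system of parameters $x,y$ is a $d$-sequence, yet \cite[Proposition 3.4(5)]{goto-ozeki} gives $\h_{\mathcal M}^0(G(Q))\cong \h_\m^0(A)\neq 0$, so $\depth G(Q)=0<d-1$. The implication you need is true only because $\depth R\geq d-1$, and that is exactly the nontrivial content at this point; moreover your postulation-number identity $\eta(\bar Q)=\eta(Q)+(d-2)$ presupposes that $x_1^*,\ldots,x_{d-2}^*$ are $G(Q)$-regular, so the gap propagates through the entire implication. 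What closes it (and what the paper does) is a two-step argument you never make: from the $d$-sequence property of the images of $x_{d-1},x_d$ in $R_{d-2}$, \cite[Proposition 3.4(5)]{goto-ozeki} yields $\h_{\mathcal M}^0(G(QR_{d-2}))\cong\h_\m^0(R_{d-2})=0$, hence $\depth G(QR_{d-2})\geq 1$; then the Sally machine --- which requires superficiality of the $x_i$, not mere $R$-regularity --- lifts this to $\depth G(Q)\geq d-1$. Only after this ascent are the initial forms genuinely $G(Q)$-regular, so that \cite[Lemma 2.8]{Marley} gives the shift of postulation numbers (the paper then descends to $R_{d-1}$ and uses \cite[Proposition 3.4(3)]{goto-ozeki} to get $\eta(QR_{d-1})\leq 0$, hence $\eta(Q)\leq 1-d$). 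Your proposal correctly flags this as ``the main obstacle'' but then waves at the $d$-sequence literature instead of supplying the missing Sally-machine step; alternatively you could extract $\depth G(QR_{d-2})\geq 1$ from the two-dimensional Goto--Ozeki equivalence you already invoke, but the ascent to $\depth G(Q)\geq d-1$ would still be needed and is absent as written.
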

\begin{proof}
\eqref{theorem-e2-item1}
Set $R_i=R/(x_1,\ldots,x_{i})$ for $1\leq i\leq d-1$ and $R_0=R$. Then $e_2(Q)=e_2(QR_{d-2})$ and by Theorems
\ref{theorem-main-(-1)} and \ref{theorem-main-2}, $-\lambda_R(\h_{\m}^{1}(R_{d-2}))\leq e_2(Q) \leq 0$. From \eqref{eqn-length-of-H-m-decreasing}, we have that
$$\lambda_R(\h_{\m}^1(R_{d-2}))\leq \lambda_R(\h_{\m}^2(R_{d-3}))\leq\ldots\leq \lambda_R(\h_{\m}^{d-1}(R)).$$
\eqref{corr-e2-part-2} \ref{corr-e2-a}$\implies$ \ref{corr-e2-b0}
 $e_2(Q)=0\implies e_2(QR_{d-2})=0$.
Therefore for all $l\geq 1$, the images of $x_{d-1}^l,x_d^l$ in $R_{d-2}$ is a $d$-sequence by 
Theorem \ref{theorem-vanishing-of-ed}. Since $x_1,\ldots,x_{d-1}$ is a
regular sequence in $R$, it follows that $x_1,\ldots,x_{d-2},x_{d-1}^l,x_{d}^l$
is a $d$-sequence in $R$.

\ref{corr-e2-b0}$\implies$\ref{corr-e2-b} It is obvious.

\ref{corr-e2-b}$\implies$\ref{corr-e2-c}
Since the images of $x_{d-1},x_d$
  in $R_{d-2}$ is a $d$-sequence, we have that $\h_{\mathcal{M}}^0(G(QR_{d-2}))\cong \h_{\m}^0(R_{d-2})$, see \cite[Proposition 3.4(5)]{goto-ozeki}. Since
  $\depth R_{d-2}\geq 1$, we get $\h_{\mathcal{M}}^0(G(QR_{d-2}))=0$ which implies  $\depth G(QR_{d-2})\geq 1$.
  Thus by Sally-machine,
  $\depth G(Q)\geq d-1$.
  Since the image of $x_d$ in $R_{d-1}$ is a $d$-sequence, by 
  \cite[Proposition 3.4(3)]{goto-ozeki}, $\eta(QR_{d-1})\leq 0$.
    Since $x_1^*,\ldots, x_{d-1}^*$ is a regular sequece in $G(Q)$ by \cite[Lemma 1.3]{RV} , we 
  get $\eta(Q)=\eta(QR_{d-1})-(d-1)\leq 1-d$ by \cite[Lemma 2.8]{Marley}.

\ref{corr-e2-c}$\implies$\ref{corr-e2-a}
  $\eta(Q)\leq 1-d$, then $P(Q,n)=H(Q,n)=0$ for $n=0,-1,\ldots,2-d$. By putting the values of
  $n$ into $P(Q,n)$ successively, we easily get that $e_i(Q)=0$ for $2\leq i\leq d$.

\eqref{theorem-e2-item3} It follows from part \eqref{corr-e2-part-2}.
\end{proof}
The depth condition on the ring is necessary as evidenced by the following example.
\begin{example}
 In Example \ref{example-gcm-22-1}, let $\dim R=d\geq 4$ and $t=d-3$ so that $D=R/(X_1,X_2,X_3)$ and $A=R\ltimes D.$ Let $q=(X_1,\ldots,X_d)$
  and $Q=qA$. Then $e_2(Q,A)=0$ but  $\depth G(Q)=d-3$ and $e_3(Q,A)\neq 0$ by \eqref{eqn-22-1}. In this case, $\depth A=d-3.$
\end{example}
\nocite{*}

\noindent {\it Acknowledgement}: The second author would like to thank Indian Institute of Technology Guwahati for providing fellowship during the period when the research was carried out.

\addcontentsline{toc}{section}{References}
\bibliographystyle{plain}

\end{document}